\documentclass[11pt]{amsart}
\usepackage[all]{xy}
\usepackage[all]{xy} 
\usepackage{amssymb} 
\usepackage{amsmath} 
\usepackage{graphicx} 
\usepackage[english]{babel} 
\usepackage{epsfig}
\usepackage{color}
\usepackage{enumerate}
\swapnumbers

\theoremstyle{plain}
\newtheorem{theorem}[subsection]{Theorem}
\newtheorem{proposition}[subsection]{Proposition}
\newtheorem{lemma}[subsection]{Lemma}
\newtheorem{corollary}[subsection]{Corollary}
\theoremstyle{definition}
\newtheorem{definition}[subsection]{Definition}
\newtheorem{examples}[subsection]{Examples}
\newtheorem{example}[subsection]{Example}
\newtheorem{remark}[subsection]{Remark}







\def\Id{\mathrm{Id }}

\def\TriDend{\rm{TriDend} }

\def\qGV{\rm{qGV}}

\def\KK{\mathbb{K}}

\def\arbreA{\xymatrix@R=3pt@C=3pt{
&& \\
&*{}\ar@{-}[ur] \ar@{-}[ul] \ar@{-}[d]     &\\
&&
}}
\def\arbreATunTdeux{\xymatrix@R=3pt@C=3pt{
t_1&&t_2 \\
&*{}\ar@{-}[ur] \ar@{-}[ul] \ar@{-}[d]     &\\
&&
}}

\def\arbreAtbis{\xymatrix@R=3pt@C=3pt{
t_1&& \\
&*{}\ar@{-}[ur] \ar@{-}[ul] \ar@{-}[d]     &\\
&&
}}

\def\arbreAtter{\xymatrix@R=3pt@C=3pt{
t_1&\cdots&t_r \\
&*{}\ar@{-}[ur] \ar@{-}[ul] \ar@{-}[d]     &\\
&&
}}

\def\arbreAtterencore{\xymatrix@R=3pt@C=3pt{
t_{r-1}&\cdots&t_r \\
&*{}\ar@{-}[ur] \ar@{-}[ul] \ar@{-}[d]     &\\
&&
}}

\def\arbreAtterdroite{\xymatrix@R=3pt@C=3pt{
&&t_3 \\
&*{}\ar@{-}[ur] \ar@{-}[ul] \ar@{-}[d]     &\\
&&
}}
\def\arbreBA{\xymatrix@R=2pt@C=2pt{
&&&&\\
&&&*{}\ar@{-}[ul] & \\
&&*{}\ar@{-}[uurr] \ar@{-}[uull] \ar@{-}[d]     &&\\
&&&&
}}

\def\arbreBAtbis{\xymatrix@R=2pt@C=2pt{
t_1&&t_2&\cdots&t_r\\
&&&*{}\ar@{-}[ul] & \\
&&*{}\ar@{-}[uurr] \ar@{-}[uull] \ar@{-}[d]     &&\\
&&&&
}}

\def\arbreAB{\xymatrix@R=2pt@C=2pt{
&&&&\\
&*{}\ar@{-}[ur] &&& \\
&&*{}\ar@{-}[uurr] \ar@{-}[uull] \ar@{-}[d]     &&\\
&&&&
}}

\def\arbreABtbis{\xymatrix@R=2pt@C=2pt{
t_1&\cdots&t_{r-1}&&t_r\\
&*{}\ar@{-}[ur] &&& \\
&&*{}\ar@{-}[uurr] \ar@{-}[uull] \ar@{-}[d]     &&\\
&&&&
}}

\def\arbreBB{\xymatrix@R=2pt@C=2pt{
&&*{}&&\\
&&&& \\
&&*{}\ar@{-}[uurr] \ar@{-}[uull] \ar@{-}[d] \ar@{-}[uu]     &&\\
&&&&
}}

\def\arbreABC{\xymatrix@R=1pt@C=1pt{
&&&&&&\\
&*{}\ar@{-}[ur] &&&&& \\
&&*{}\ar@{-}[uurr] &&&&\\
&&&*{}\ar@{-}[uuurrr] \ar@{-}[uuulll] \ar@{-}[d] &&&\\
&&&&&&
}}

\def\arbreAAC{\xymatrix@R=1pt@C=1pt{
&&&&&&\\
&*{}&&&&& \\
&&*{}\ar@{-}[uurr]\ar@{-}[uu]  &&&&\\
&&&*{}\ar@{-}[uuurrr] \ar@{-}[uuulll] \ar@{-}[d] &&&\\
&&&&&&
}}

\def\arbreBAC{\xymatrix@R=1pt@C=1pt{
&&&&&&\\
&&&*{}\ar@{-}[ul] &&& \\
&&*{}\ar@{-}[uurr] &&&&\\
&&&*{}\ar@{-}[uuurrr] \ar@{-}[uuulll] \ar@{-}[d] &&&\\
&&&&&&
}}

\def\arbreACA{\xymatrix@R=1pt@C=1pt{
&&&&&&\\
&*{}\ar@{-}[ur] &&&&*{}\ar@{-}[ul] & \\
&&&&&&\\
&&&*{}\ar@{-}[uuurrr] \ar@{-}[uuulll] \ar@{-}[d] &&&\\
&&&&&&
}}

\def\arbreACC{\xymatrix@R=1pt@C=1pt{
&&&&&&\\
&*{}\ar@{-}[ur]&&&&&\\
&&&&& & \\
&&&*{}\ar@{-}[uuurrr] \ar@{-}[uuulll] \ar@{-}[d] \ar@{-}[uuu]&&&\\
&&&&&&
}}

\def\arbreCAB{\xymatrix@R=1pt@C=1pt{
&&&&&&\\
&&&*{}\ar@{-}[ur] &&& \\
&&&&*{}\ar@{-}[uull] &&\\
&&&*{}\ar@{-}[uuurrr] \ar@{-}[uuulll] \ar@{-}[d] &&&\\
&&&&&&
}}

\def\arbreCBA{\xymatrix@R=1pt@C=1pt{
&&&&&&\\
&&&&&*{}\ar@{-}[ul] & \\
&&&&*{}\ar@{-}[uull] &&\\
&&&*{}\ar@{-}[uuurrr] \ar@{-}[uuulll] \ar@{-}[d] &&&\\
&&&&&&
}}

\def\arbreAt {\xymatrix@R=3pt@C=3pt{
&& \\
&& \\
&*{}\ar@{-}[uur] \ar@{-}[uu]\ar@{-}[uul] \ar@{-}[d]     &\\
&&}}

\def\arbreAttt{\xymatrix@R=3pt@C=3pt{
&&&& \\
&&&&\\
&&*{}\ar@{-}[uurr] \ar@{-}[uur]\ar@{-}[uul] \ar@{-}[uull]\ar@{-}[dd]     &&\\
&&&&\\
&&&&}}

\def\mcorol{\xymatrix@R=3pt@C=3pt{
&&m&& \\
&&\cdots&&\\
&&*{}\ar@{-}[uurr] \ar@{-}[uur]\ar@{-}[uul] \ar@{-}[uull]\ar@{-}[d]     &&\\
&&&&}}

\def\tcorol{\xymatrix@R=3pt@C=3pt{
t_1&t_2&&&t_r \\
&&\cdots&&\\
&&*{}\ar@{-}[uurr] \ar@{-}[uur]\ar@{-}[uul] \ar@{-}[uull]\ar@{-}[d]     &&\\
&&&&}}

\def\tcorolbis{\xymatrix@R=3pt@C=3pt{
t_2&t_3&&&t_r \\
&&\cdots&&\\
&&*{}\ar@{-}[uurr] \ar@{-}[uur]\ar@{-}[uul] \ar@{-}[uull]\ar@{-}[d]     &&\\
&&&&}}

\def\wcorol{\xymatrix@R=3pt@C=3pt{
w_1&w_2&&&w_s \\
&&\cdots&&\\
&&*{}\ar@{-}[uurr] \ar@{-}[uur]\ar@{-}[uul] \ar@{-}[uull]\ar@{-}[d]     &&\\
&&&&}}

\def\tsuccwcorol{\xymatrix@R=3pt@C=3pt{
t_1&t_2&&t_{r-1}&t_r\star w \\
&&\cdots&&\\
&&*{}\ar@{-}[uurr] \ar@{-}[uur]\ar@{-}[uul] \ar@{-}[uull]\ar@{-}[d]     &&\\
&&&&}}

\def\tcdotwcorol{\xymatrix@R=3pt@C=3pt{
t_1&\cdots&t_{r-1}&t_r\star w_1&w_2&\cdots&w_s \\
&&&&&&\\
&&&*{}\ar@{-}[uurrr] \ar@{-}[uur] \ar@{-}[uu] \ar@{-}[uulll]\ar@{-}[uul]\ar@{-}[d]     &&&\\
&&&&&&}}

\def\tprecwcorol{\xymatrix@R=3pt@C=3pt{
t\star w_1&w_2&&&w_s \\
&&\cdots&&\\
&&*{}\ar@{-}[uurr] \ar@{-}[uur]\ar@{-}[uul] \ar@{-}[uull]\ar@{-}[d]     &&\\
&&&&}}

\DeclareOption{notitlepage}

\begin{document}

\author[E. Burgunder, M. Ronco]{Emily Burgunder, Mar\'ia Ronco}
\address{EB: Instytut Matematyczny Polskiej Akademii Nauk,
ul. \'Sniadeckich 8,
P.O. Box 21,
00-956 Warszawa, Poland.}
\email{burgunde@impan.gov.pl}
\urladdr{http://www.impan.pl/~burgunde/}
\address{MOR: Departamento de Matematicas,
  Facultad de Ciencias-Universidad de Valparaiso,
      Avda. Gran Breta{\~n}a 1091,
         Valparaiso, Chile.}
\email{maria.ronco@uv.cl}
\thanks{Our joint work was supported by the Project ECOS-Conicyt C06E01. The first author's work is partially supported by an IPDE grant and an ANR program. The second author's work is partially supported by FONDECYT Project 1085004. Both authors want to thank the Banach Center for its hospitality.}

\title{Tridendriform structure on combinatorial Hopf algebras}
\keywords{Parking functions, bialgebra, tridendriform, planar trees, Solomon-Tits algebra, Poincar\'e-Birkhoff-Witt Cartier-Milnor-Moore theorem}

\begin{abstract} 
We extend the definition of tridendriform bialgebra by introducing a weight $q$.  The subspace of primitive elements of a $q$-tridendriform bialgebra is equipped with an associative product and a natural structure of brace algebra, related by a distributive law. This data is called $q$-Gerstenhaber-Voronov algebras. We prove the equivalence between the categories of connected $q$-tridendriform bialgebras and of $q$-Gerstenhaber-Voronov algebras. The space spanned by surjective maps, as well as the space spanned by parking functions, have natural structures of $q$-tridendriform bialgebras, denoted ${\mbox {\bf ST}(q)}$ and  ${\mbox {\bf PQSym}(q)^*}$, in such a way that ${\mbox {\bf ST}(q)}$ is a sub-tridendriform bialgebra of ${\mbox {\bf PQSym}(q)^*}$. Finally we show that the bialgebra of ${\mathcal M}$-permutations defined by T. Lam and P. Pylyavskyy may be endowed with a natural structure of $q$-tridendriform algebra which is a quotient of ${\mbox {\bf ST}(q)}$.
\end{abstract}

\maketitle
\section*{Introduction} \label{s:int} 

	Some associative algebras admit finer algebraic structures. Dendriform algebras were introduced by J.-L. Loday in \cite{Lod1} as associative algebras whose product splits into two binary operations satisfying some relations.  In particular, any associative product induced somehow by the shuffle product is an example of dendriform structure. The algebraic operad describing dendriform algebras is regular, so it is determined by the free dendriform algebra on one element, which is the algebra of planar binary rooted trees described in \cite{LodRon0}. The natural question which arises is the existence of a regular operad such that the free algebra spanned by one element has as underlying vector space the space spanned by all planar rooted trees. Here are two examples of such an operad:\begin{enumerate}
\item in \cite{Chapoton}, F. Chapoton defined a $K$-algebra as a graded dendriform algebra equipped  with an extra associative product and a boundary map, satisfying certains conditions. When considering the free $K$-algebra on one element, the differential homomorphism on planar trees coincides with the co-boundary map of the associahedra.
\item in a joint work of J.-L. Loday and the second author, see \cite{LodRon}, the authors introduced the notion of tridendriform algebra, which is an associative algebra such that the product splits in three operations.
\end{enumerate}
In fact the free $K$-algebra is the associated graded algebra of the free tridendriform algebra. The strong interest of F. Chapoton\rq s work is that the differential map of the free $K$-algebra spanned by one element gives the co-boundary map of the associahedra, while most of the examples of combinatorial Hopf algebras enter in the tridendendriform case, as the algebras of parking functions and big multi-permutations described in the present work.

In this paper, we define the notion of $q$-tridendriform algebra which is a weighted tridendriform algebra (the weight being on the associative product). The advantage of this notion is that it permits us to deal simultaneously with tridendriform algebras (when $q=1$) and the notion of $K$-algebras (obtained when $q=0$). Mimicking the definition of dendriform bialgebra given in \cite{Ron}, a $q$-tridendriform bialgebra is a bialgebra such that the associative product comes from a $q$-tridendriform structure, which satisfies certain relations with the coproduct.
	
Any dendriform algebra $H$ may be equipped with a brace algebra structure (see \cite{Ron0}), in such a way that whenever $H$ is a dendriform bialgebra the subspace ${\mbox {\it Prim}(H)}$ of primitive elements of $H$ is a sub-brace algebra. Moreover, the category of connected dendriform bialgebras and the category of brace algebras are equivalent (see \cite{Cha} and \cite{Ron}). We extend these results to $q$-tridendriform bialgebras by introducing the notion of $q$-Gerstenhaber-Voronov algebras, denoted ${\mbox {\it GV}_q}$-algebras, which are brace algebras $(B, M_{1n})$ equipped with an associative product $\cdot $ which satisfies the distributive law:
$$\displaylines {
M_{1n}(x\cdot y;z_1,\dots ,z_n)=\hfill\cr
\hfill \sum _{0\leq i\leq j\leq n}q^{j-i}M_{1i}(x;z_1,\dots ,z_i)\cdot z_{i+1}\cdot \dots \cdot z_j\cdot M_{1(n-j)}(y;z_{j+1},\dots ,z_n).\cr}$$
Applying that any $q$-tridendriform bialgebra has a natural structure of dendriform algebra, it is not difficult to see that it is possible to associate to any $q$-tridendriform algebra a ${\mbox {\it GV}_q}$ algebra which has the same underlying vector space. Following the results described in \cite{Ron}, we prove that:\begin{enumerate}
\item  the subspace of primitive elements of a $q$-tridendriform bialgebra $H$ is a sub-${\mbox {\it GV}_q}$ subalgebra of $H$,
\item the free $q$-tridendriform algebra spanned by a vector space $V$ is isomorphic, as a coalgebra, to the cotensor coalgebra of the free ${\mbox {\it GV}_q}$ algebra spanned by $V$,
\item the category of connected $q$-tridendriform bialgebras is equivalent to the category of ${\mbox {\it GV}_q}$-algebras.\end{enumerate}
Our result gives a good-triple of operads for the theory of generalised bialgebra by J.-L. Loday, cf.\cite{Loday GB}. 

Let us point out that, applying F. Chapoton\rq s results, the operad of ${\mbox {\it GV}_0}$-algebras may be equipped with a differential in such a way that we recover the operad ${\mathcal S}_2$ described in \cite{Tur}, also called {\it homotopy G-algebra} in \cite{GerVor}.
\medskip
	
Given a positive integer $n$, let $[n]$ denote the set $\{ 1,\dots ,n\}$. We define a $q$-tridendriform bialgebra structure on the space spanned by all surjective maps from $[n]$ to $[r]$, for all positive integers $r\leq n$, which we denote by ${\mbox {\bf ST}(q)}$. In \cite{NovThi0} and \cite{NovThi1}, J.-C. Novelli and J.-Y. Thibon define the tridendriform bialgebra ${\mbox {\bf PQSym}^*}$ of parking functions, this structure is easily generalized to any $q$. The natural map which associates to any parking function a surjective map is called the standardization, its dual induces a monomorphism of $q$-tridendriform bialgebras from ${\mbox {\bf ST}(q)}$ to ${\mbox {\bf PQSym}^*(q)}$, which differs from the one defined in \cite{NovThi1}. In a forthcoming paper, we apply this homomorphism to prove that ${\mbox {\bf PQSym}^*(q)}$ is free as a tridendriform algebra, as was conjectured in \cite{NovThi1}. On the other hand, we show that the bialgebra ${\mathcal M}{\mbox {\it MR}}$ of big multi-permutations defined by T. Lam and P. Pylyavskyy in \cite{LamPyl} comes from a tridendriform bialgebra structure, and we generalize their construction to any $q$. Finally, we prove that the $q$-tridendriform bialgebra ${\mathcal M}{\mbox {\it MR}(q)}$ is a quotient of ${\mbox {\bf ST}(q)}$.
\medskip

The paper is organized as follows: the first section gives the definition of $q$-tridendriform bialgebra, illustrated by some examples. In the next section we prove the structure theorem for connected $q$-tridendriform bialgebras and ${\mbox {\it GV}_q}$-algebras, which generalises the Cartier-Milnor-Moore Theorem in our context. In the last section we describe the $q$-tridendriform structures of the bialgebras of parking functions and of big multi-permutations and prove that there exists a diagram of $q$-tridendriform bialgebras:
\begin{eqnarray*}
	{\mbox {\bf PQSym}^*(q)} \hookleftarrow {\mbox {\bf ST}(q)} \rightarrow {\mathcal M}{\mbox {\it MR}(q)}
	\end{eqnarray*}
	
\subsection*{Acknowledgment}  We are grateful to Jean-Yves Thibon and Jean-Christophe Novelli for stimulating discussions on parking functions and combinatorial Hopf algebras.  We would like to thank Victor Turchin for helpful explanations on the operad ${\mathcal S}_2$.
 
\bigskip
\section*{Notations}

All vector spaces and algebras we consider are over a field $\KK$. Given a set $X$, we denote by $\KK[X]$ the vector space spanned by $X$. For any vector space $V$, we denote by $V^{\otimes n}$ the tensor product of $V\otimes \dots \otimes V$, $n$ times, over $\KK$. In order to simplify notation, we shall denote an element 
of $V^{\otimes n}$ indistinctly by $x_1\otimes \dots \otimes x_n$ or $(x_1,\dots ,x_n)$.

A \emph{coalgebra} over $\KK$ is a vector space $C$ equipped with a linear homomorphism $\Delta:C\longrightarrow C\otimes C$ which is coassociative. A counit of a coalgebra $(C,\Delta )$ is a linear homomorphism $\epsilon :C\longrightarrow \KK$ such that $\mu\circ (\epsilon \otimes Id_C)\circ \Delta =id_C=\mu\circ(Id_C\otimes \epsilon )\circ \Delta$, where $\mu$ denotes the action of $\KK$ on $C$. The kernel of $\epsilon $ is denoted by ${\overline C}$.

For any coalgebra $(C,\Delta )$ the image of an element $x\in C$ under $\Delta$ is denoted using the Sweedler\rq s notation $\Delta (x)=\sum x_{(1)}\otimes x_{(2)}$. 

Let $(C,\Delta ,\epsilon )$ be a counital coalgebra such that 
$C=\KK\oplus {\overline C}$ , an element $x\in C$ is \emph{primitive} if $\Delta (x)=x\otimes 1_{\KK}+1_{\KK}\otimes x$. The subspace of primitive elements of $C$ is denoted ${\mbox {\it Prim}(C)}$. There exists a natural filtration on ${\overline C}$ given by:\begin{itemize}
\item $F_{1}(C)={\mbox {\it Prim}(C)}$,
\item $F_n(C):=\left\{x \in {\overline C} \mid \bar \Delta(x) \in F_{n-1}C \otimes F_{n-1}C\right\}\ ,$

\noindent where $\bar\Delta (x)=\Delta(x)-1_{\KK}\otimes x-x\otimes 1_{\KK}$.\end{itemize}

\begin{definition}\label{def:connected}
{\rm The counital coalgebra $C$ is said to be \emph{connected} if 
$$C=\KK\oplus  \bigcup_{n \geq 1}F_{n}C\ .$$ }
\end{definition}

Given a vector space $V$, we denote by $T^c(V)$ the space $T(V)=\bigoplus _{n\geq 0}V^{\otimes n}$ equipped with the coalgebra structure given by deconcatenation:
$$\Delta ^c(x_1\otimes \dots \otimes x_n):=\sum _{i=0}^n(x_1\otimes \dots \otimes x_i)\otimes (x_{i+1}\otimes \dots \otimes x_n),$$
for $x_1,\dots ,x_n\in V$.
\bigskip

 Let $n$ be a natural number, the ordered set $\{1,\dots ,n\}$ is denoted $[n]$. If $J=\{j_1,\dots ,j_k\}\subseteq [n]$ and $r\geq 1$, we note by $J+r$ the set $\{ j_1+r,\dots ,j_k+r\}$.
A \emph{composition} of $n$ is an ordered set ${\underline n}=(n_1,\dots ,n_r)$ of positive integers such that $ \sum _{i=1}^rn_i=n$; while a \emph{partition} of $n$ is a sequence of non-negative integers $\lambda = (l_1,\dots ,l_r)$ such that $\sum _{i=1}^rl_i=n$.

The symmetric group of permutations of $n$ elements is denoted by $S_n$. Given a composition ${\underline n}=(n_1,\dots ,n_r)$ of $n$, a ${\underline n}${\it -shuffle} is a permutation $\sigma\in S_n$ such that $\sigma (n_1+\dots +n_i+1)<\dots <\sigma (n_1+\dots +n_{i+1})$, for $0\leq i\leq r-1$. We denote by ${\mbox {\it Sh}(n_1,\dots ,n_r)}$ the set of all 
 ${\underline n}$-shuffles.
 
Consider the set of maps between finite sets. Given $f:[n]\longrightarrow [r]$, we denote $f$ by its image $(f(1),\dots ,f(n))$. 
 The quotient of the set of maps from $[n]$ to $\bigcup _{r\geq 1}[r]$ by the equivalence relation:
$$f\sim h,\ {\rm if,\ and\ only\ if,}\ (f(1),\dots ,f(n))=(h(1),\dots ,h(n)),$$
is denoted ${\mathcal F}_n$.

Given a map $f: [n]\longrightarrow [r]$ and a subset $J=\{i_1<\dots <i_k\}\subseteq [n]$, the restriction of $f$ to $J$ is the map $f\vert_J:=(f(i_1)\dots ,f(i_k))$. Similarly, for a subset $K$ of $[r]$, the co-restriction of $f$ to $K$ is the map $f\vert ^K:=(f(j_1),\dots ,f(j_l))$, where $\{j_1<\dots <j_l\}:=\{ i\in [n]/f(i)\in K\}$.

For any map $f\in {\mathcal F}_n$, let ${\mbox {\it max}(f)}$ be the maximal element in the image of $f$. If $g\in {\mathcal F}_r$ is another map, then $fg$ is the element in ${\mathcal F}_{n+r}$ such that 
$$fg(i):=\begin{cases}f(i),&\ {\rm for}\ 1\leq i\leq n,\\
g(i-n),&\ {\rm for}\ n+1\leq i\leq n+r.\end{cases}$$
We denote by $\cap (f,g)$ the cardinal of the intersection ${\mbox {\it Im}(f)}\cap {\mbox {\it Im}(g)}$.
 \bigskip

\section{Tridendriform bialgebras}
\medskip

Let $q$ be an element of $\KK$, we introduce the definition of $q$-tridendriform algebra in such a way that for $q=1$ we get the definition of tridendriform algebra given in \cite{LodRon}, while for $q=0$ we get the definition of ${\mathcal P}$-algebra described in \cite{Chapoton}.

\begin{definition}\label{def:TriDend} {\rm A \emph{$q$-tridendriform algebra} is a vector space $A$ together with three operations $\prec:A\otimes A\to A$, $\cdot:A\otimes A\to A$ and $\succ:A\otimes A\to A$,
satisfying the following relations:
\begin{enumerate}
\item $(a\prec b)\prec c=a\prec(b\prec c + b\succ c+ q\ b \cdot c)$,
\item $(a\succ b)\prec c=a\succ(b\prec c)$,
\item $(a\prec b+ a\succ b+ q\ a\cdot b)\succ c=a\succ(b\succ c)$,
\item $(a\cdot b)\cdot c=a\cdot(b\cdot c)$,
\item $(a\succ b)\cdot c=a\succ(b\cdot c)$,
\item $(a\prec b)\cdot c=a\cdot(b\succ c)$,
\item $(a\cdot b)\prec c=a\cdot(b\prec c)$.
\end{enumerate} }
\end{definition}

Note that the operation $*:= \ \prec +q\ \cdot +\succ$ is  associative. Moreover, given a $q$-tridendriform algebra $(A, \prec ,\cdot, \succ )$, the space $A$ equipped with the binary operations $\prec $ and ${\overline {\succ}}:=q\cdot + \succ  $ is a dendriform algebra, as defined by J.-L. Loday in \cite{Lod1}.
\medskip

Our main goal is to study the tridendriform algebra structures of the space of parking functions defined in \cite{NovThi1} and of the space of multipermutations introduced in \cite{LamPyl}, which we treat in the next sections. We give in the present section some other examples. The first one is described in \cite{LodRon} for $q=1$ and in \cite{Chapoton} for $q=0$, while the second one is studied in \cite{PalRon} for $q=1$ and in \cite{Chapoton} for $q=0$. 

\begin{examples}\label{extridend} {\bf a)} {\rm Let $T_n$ denote the set of planar rooted trees with $n+1$ leaves. For instance, 
$$T_0=\{ \vert \}\ \ ,\  T_1=\{ \vcenter {\arbreA} \}\ , \ \ T_2=\{\vcenter {\arbreAB} ,\vcenter {\arbreAt} ,\vcenter {\arbreBA}\}\ \ .$$ 
The tree with $n+1$ leaves and a unique vertex (the root) is called the $n$-{\it corolla}, and denoted $c_n$.

Given trees $t^1,\dots ,t^r$, let $\bigvee (t^1,\dots ,t^r)$ be the tree obtained by joining the roots of $t^1,\dots ,t^r$, ordered from left to right, to a new root. It is easy to see that any tree $t\in T_n$ may be written in a unique way as $t=\bigvee (t^1,\dots ,t^r)$, with $t^i\in T_{n_i}$ and $\sum _{i=1}^rn_i+r-1=n$.
On the space $\KK[T_{\infty }]$ spanned by the set $T_{\infty}:=\bigcup _{n\geq 1}T_n$, we define operations $\prec $, $\cdot$ and $\succ $ recursively as follows:
$$\displaylines {
t\succ \vert=t\cdot \vert =\vert \cdot t=\vert \prec t=0,\ {\rm for \ all}\ t\in T_{\infty},\hfill \cr
\vert \succ t=t\prec \vert =t,\ {\rm for \ all}\ t\in T_{\infty},\hfill \cr
t\prec w=\bigvee (t^1,\dots ,t^{r-1},t^r*w),\hfill\cr
t\cdot w:=\bigvee (t^1,\dots ,t^{r-1},t^r*w^1,w^2,\dots ,w^l),\hfill\cr
t\succ w:=\bigvee (t*w^1,w^2,\dots ,w^l),\hfill\cr  }$$
for $t=\bigvee (t^1,\dots ,t^r)$ and $w=\bigvee (w^1,\dots ,w^l)$, where $*$ is the associative product $*=\prec + q\cdot +\succ  $ previously defined.

Note that, even if we need to consider the element $\vert \in T_0$ as the identity for the product $*$ in order to define the tridendriform structure on $\KK[T_{\infty }]$, the elements $\vert \prec \vert $ , $\vert \cdot \vert $ and $\vert 
\prec \vert $ are not defined.

Following  \cite{Chapoton} and \cite{LodRon}, it is immediate to verify that the data $(\KK[T_{\infty }],~\prec ,~\cdot ,\succ )$ is the free $q$-tridendriform algebra spanned by the unique element of $T_1$. 

For any vector space $V$, the $q$-tridendriform structure of $\KK[T_{\infty }]$ extends naturally to the space ${\mbox {\it Tridend}_q}(V):=\bigoplus _{n\geq 1}\KK[T_n]\otimes V^{\otimes n}$ as follows:
$$(t\otimes v_1\otimes \dots \otimes v_n)\circ (w\otimes u_1\otimes \dots \otimes u_m):=(t\circ w)\otimes v_1\otimes \dots \otimes v_n\otimes u_1\otimes \dots \otimes u_m;$$
where $\circ$ is replaced either by $\succ $, or $\prec$, or $\cdot$, respectively. In this case, ${\mbox {\it Tridend}_q}(V)$ is the free $q$-tridendriform algebra spanned by $V$ (see \cite{Chapoton} and \cite{LodRon}).}
\medskip

\noindent {\rm {\bf b)} Let ${\mbox {\bf ST}_n^r}$ be the set of surjective maps from $[n]$ to $[r]$, for $1\leq r\leq n$, and let ${\mbox {\bf ST}_n}:=\bigcup _{r=1}^n {\mbox {\bf ST}_n^r}$. Given $f:[n]\longrightarrow [r]$ there exists a unique surjective map ${\mbox {\it std}(f)}\in {\mbox {\bf ST}_n^r}$ such that $f(i)<f(j)$ if, and only if, ${\mbox {\it std}(f)}(i)<{\mbox {\it std}(f)}(j)$, for $1\leq i,j\leq n$. The map ${\mbox {\it std}(f)}$ is called the standardization of $f$. 

\noindent For example if $f=(2,3,3,5,7)$, then ${\mbox {\it std}(f)}=(1,2,2,3,4)$.

Let $\times:{\mbox {\bf ST}_n^r}\times {\mbox {\bf ST}_m^s}\longrightarrow {\mbox {\bf ST}_{n+m}^{r+s}}$ be the map $$(\alpha , \beta) \mapsto \alpha\times \beta :=(\alpha (1),\dots ,\alpha (n),\beta (1)+r,\dots ,\beta(m)+r).$$

Let ${\mbox {\bf ST}(q)}$ be the vector space ${\mbox {\bf ST}}:=\bigoplus _{n\geq 1}\KK[{\mbox {\bf ST}_n}]$ equipped with the operations $\succ _q$, $\cdot_q$ and $\prec_q$ defined as follows:
$$\displaylines {
f \succ_q g :=\sum _{max(h)<max(k)}q^{\cap(h,k)}hk,\hfill\cr
f \cdot_q g :=\sum _{max(h)=max(k)}q^{\cap(h,k)-1}hk,\hfill\cr
f \prec_q g :=\sum _{max(h)>max(k)}q^{\cap(h,k)}hk,\hfill\cr }$$
where the sums are taken over all pairs of maps $(h,k)$ verifying that $hk$ is surjective, ${\mbox {\it std}(h)}=f $ and ${\mbox {\it std}(k)}=g$, for $f \in {\mbox {\bf ST}_n}$ and $g \in {\mbox {\bf ST}_m}$.
\medskip

For example, if $\alpha = (1,2,1)\in {\mbox {\bf ST}_3}$ and $\beta =(2,1)\in {\mbox {\bf ST}_2}$, then
$$\displaylines {
\alpha \succ \beta =(1,2,1,4,3)+q(1,2,1,3,2)+q(1,2,1,3,1)+(1,3,1,4,2)+(2,3,2,4,1),\hfill\cr
\alpha \cdot \beta =q(1,2,1,2,1)+(1,3,1,3,2)+(2,3,2,3,1),\hfill\cr
\alpha \prec \beta =q(1,3,1,2,1)+(1,4,1,3,2)+q(2,3,2,2,1)+(2,4,2,3,1)+(3,4,3,2,1).\hfill\cr }$$

To check that $({\mbox {\bf ST}(q)}, \succ _q, \cdot_q, \prec_q)$ is a $q$-tridendriform algebra we refer to \cite{Chapoton} and to \cite{PalRon}.}
 \medskip

\noindent {\rm {\bf c)} Let $(A,\cdot )$ be an associative algebra over $\KK$. A {\it Rota-Baxter operator of weight $q$} on $A$ (see \cite{Aguiar}) is a linear map $R:A\rightarrow A$ verifying that:
$$R(x)\cdot R(y)=R(R(x)\cdot y)+R(x\cdot R(y))+qR(x\cdot y),$$
for $x,y\in A$. The data $(A,\cdot ,R)$ is called an associative Rota-Baxter algebra of weight $q$.

Any Rota-Baxter algebra $A$ of weight $q$ has a natural structure of $q$-tridendriform algebra with the associative product $\cdot $ and the operations $\prec $ and $\succ $ given by:
$$\displaylines {
x\prec y:=x\cdot R(y),\cr
x\succ y:=R(x)\cdot y,\cr }$$
for $x,y\in A$.}
\end{examples}
\bigskip

Let $(A,\prec  ,\cdot , \succ )$ be a $q$-tridendriform algebra and let $A_+:=A\oplus \KK$. We denote by $\epsilon:A_+\longrightarrow \KK$ the projection on the second term. For any $x\in A$, we fix $x\succ  1_{\KK}=x\cdot 1_{\KK}=1_{\KK}\cdot x=1_{\KK}\prec  x=0$ and $1_{\KK}\succ  x=x=x\prec  1_{\KK}$.

\begin{definition} {\rm A $q$-tridendriform bialgebra over $\KK$ is a $q$-tridendriform algebra $H$ equipped with a linear homomorphism $\Delta :H_+\longrightarrow H_+\otimes H_+$ verifying the following conditions:\begin{enumerate}
\item $\Delta (1_{\KK})=1_{ \KK}\otimes 1_{\KK},$
\item $(\epsilon \otimes Id)\circ \Delta (x)=1_{\KK}\otimes x$ and $(Id\otimes \epsilon)\circ \Delta (x)=x\otimes 1_{\KK}$, for all $x\in H$,
\item $\Delta (x\succ  y):=\sum (x_{(1)}*y_{(1)})\otimes (x_{(2)}\succ  y_{(2)})$, 
\item $\Delta (x\cdot  y):=\sum (x_{(1)}*y_{(1)})\otimes (x_{(2)}\cdot  y_{(2)})$,
\item $\Delta (x\prec  y):=\sum (x_{(1)}*y_{(1)})\otimes (x_{(2)}\prec  y_{(2)})$,
\end{enumerate}
 where $\Delta (x)=\sum x_{(1)}\otimes x_{(2)}$ for all $x\in H$, and we establish that:\begin{enumerate}
 \item $(x*_qy)\otimes (1_{\KK}\succ _q1_{\KK}):=(x\succ _q y)\otimes 1_{\KK}$,  
 \item $(x*_qy)\otimes (1_{\KK}\cdot _q1_{\KK}):=(x\cdot _q y)\otimes 1_{\KK}$, 
 \item $(x*_qy)\otimes (1_{\KK}\prec _q1_{\KK}):=(x\prec _q y)\otimes 1_{\KK}$, for $x,y\in H$.\end{enumerate}}\end{definition}
\medskip

Note that if $(H,\prec  ,\cdot , \succ ,\Delta)$ is a $q$-tridendriform bialgebra, then $(H_+,*,\Delta )$ is a bialgebra.

We describe the bialgebra structure of the $q$-tridendriform algebras described in Examples {\bf a)} and {\bf b)} of \ref{extridend}.

\noindent {\bf a)} Let $V$ be a vector space. 

\noindent Given elements $x^i=(t^i;v_1^i,\dots ,v_{n_i}^i)\in T_{n_i}\otimes V^{\otimes n_i}$, for $1\leq i\leq r$ and vectors $w_1,\dots ,w_{r-1}\in V$, let $\bigvee _{w_1,\dots ,w_{r-1}}(x^1,\dots ,x^r):=$
$$(\bigvee (t^1,\dots ,t^r);v_1^1,\dots ,v_{n_1}^1,w_1,v_1^2,\dots ,v_{n_{r-1}}^{r-1},w_{r-1},v_1^r,\dots ,v_{n_r}^r),$$
 in $T_{n}\otimes V^{\otimes n}$, where $n={\displaystyle \sum _{i=1}^rn_i+r-1}$.
 
The coproduct $\Delta $ on the free $q$-tridendriform algebra ${\mbox {\it Tridend}_q}(V)$ is the unique linear homomorphism satisfying that:
\begin{enumerate}\item $\Delta (1_{\KK})= 1_{\KK}\otimes 1_{\KK}$.
\item $\Delta (c_n;v_1,\dots ,v_n):=(c_n;v_1,\dots ,v_n)\otimes 1_{\KK} +1_{\KK}\otimes (c_n;v_1,\dots ,v_n)$, for $n\geq 1$.
\item $$\Delta (x):=\sum (x_{(1)}^1*_q\dots *_q x_{(1)}^r)\otimes \bigvee _{w_1,\dots ,w_{r-1}}(x_{(2)}^1,\dots ,x_{(2)}^r)+ x\otimes 1_{\KK},$$
for $x=\bigvee _{w_1,\dots ,w_{r-1}}(x^1,\dots ,x^r)$, with $x^i\in T_{n_i}\otimes V^{\otimes n_i}$.
\end{enumerate}.
\medskip

\noindent {\bf b)} For any $\alpha \in {\mbox {\bf ST}_n}$, define:
$$\Delta (f )=\sum_j f _{(1)}^r\otimes f _{(2)}^r,$$
where the sum is taken over all $0\leq j\leq n$, such that there exists $\delta _r\in {\mbox {\it Sh}(j,n-j)^{-1}}$ with
$f =(f _{(1)}^r\times f _{(2)}^r)\cdot \delta $. 

\noindent For example $\Delta (2,1,3,5,3,4,4,1)=$
$$\displaylines {
1_{\KK}\otimes (2,1,3,5,3,4,4,1)+(1,1)\otimes (1,2,4,2,3,3)+(2,1,1)\otimes (1,3,1,2,2)+\hfill\cr 
\hfill (2,1,3,3,1)\otimes (2,1,1)+(2,1,3,3,4,4,1)\otimes (1)+(2,1,3,5,3,4,4,1)\otimes 1_{\KK}.\cr }$$

The coproduct may also be described in terms of co-restrictions as follows:
$$\Delta (f )=\sum _{j=1}^rf \vert ^{[j]}\otimes {\mbox {\it std}(f \vert ^{[n-j]+j})}.$$

To see that ${\mbox {\bf ST}(q)}$ with $\Delta $ is a $q$-tridendriform bialgebra, suppose that $hk\in {\mbox {\bf ST}_n}$ are such that ${\mbox {\it std}(h)}=f $ and ${\mbox {\it std}(k)}=g$. It is easy to check that:
\begin{enumerate}\item if ${\mbox {\it max}(h)}<{\mbox {\it max}(k)}$, then 
$$\Delta (hk)=\sum_{max(h_{(2)})<max(k_{(2)})} h_{(1)}k_{(1)}\otimes h_{(2)}k_{(2)},$$
\item if ${\mbox {\it max}(h)}={\mbox {\it max}(k)}$, then 
$$\Delta (hk)=\sum_{max(h_{(2)})=max(k_{(2)})} h_{(1)}k_{(1)}\otimes h_{(2)}k_{(2)},$$
\item if ${\mbox {\it max}(h)}>{\mbox {\it max}(k)}$, then 
$$\Delta (hk)=\sum_{max(h_{(2)})>max(k_{(2)})} h_{(1)}k_{(1)}\otimes h_{(2)}k_{(2)},$$
where both $h_{(1)}k_{(1)}$ and $h_{(2)}k_{(2)}$ are surjective.
\end{enumerate}
Moreover, if $h_{(1)}=h\vert ^{[p]\cap Im(h)}$, $h_{(2)}=h\vert ^{[q-p]+p\cap Im(h)}$, $k_{(1)}=k\vert ^{[r]\cap Im(k)}$ and $k_{(2)}=k\vert ^{[s-r]+r\cap Im(k)}$, then $\cap (h,k)=\cap (h_{(1)},k_{(1)})+\cap(h_{(2)},k_{(2)})$.
\bigskip

\section{Structure theorem for tridendriform bialgebras}
\medskip

We want to prove that any connected $q$ tridendriform bialgebra can be reconstructed from the subspace of its primitive elements. In order to do so we need to introduce the notions of brace algebra (see \cite{GerVor}) and of $q$-Gertenhaber-Voronov algebra. Our construction mimics previous results obtained for dendriform bialgebras and brace algebras, whenever the results exposed in the present work are obtained easily applying the methods developed in \cite{Ron} we refer to it for the details of the proofs.

\begin{definition}\label{def:Gerstenhaver-Voronov}
{\rm \begin{enumerate}\item A \emph{brace} algebra is a vector space $B$ equipped with $n+1$-ary operations $M_{1n}:B\otimes B^{\otimes n}\longrightarrow B$, for $n\geq 0$, which satisfy the following conditions:
\begin{enumerate} \item $M_{10}= \Id_B$,
\item $M_{1m}(M_{1n}(x;y_1,\dots ,y_n);z_1,\dots ,z_m)=$
$$\hfill \sum_{0\leq i_1\leq j_1\leq \dots \leq j_n\leq m} M_{1r}(x;z_1,\dots ,z_{i_1},M_{1l_1}(y_1;\dots ,z_{j_1}),\dots ,M_{1l_n}(y_n;\dots , z_{j_n}),\dots ,z_m),$$
for $x,y_1,\dots ,y_n,z_1,\dots ,z_m\in B$, where $l_k=j_k-i_k$, for $1\leq k\leq n$, and $r=\sum _{k=1}^ni_k+m-j_n+n$.\end{enumerate}
\item  A \emph{$q$-Gerstenhaber-Voronov} algebra, ${\mbox {\it GV}_q}$ algebra for short, is 
a vector space $A$ endowed with a brace structure given by operations $M_{1n}$ and an associative product~$\cdot$, 
satisfying the distributive relation:
$$\displaylines {
M_{1n}(x\cdot y;z_1,\dots ,z_n)=\hfill\cr
\hfill\sum _{0\leq i\leq j\leq n}q^{j-i}M_{1i}(x;z_1,\dots ,z_i)\cdot
z_{i+1}\cdot \dots \cdot z_j\cdot M_{1(n-j)}(y;z_{j+1},\dots ,z_n),\cr }$$
for $x,y,z_1,\dots ,z_n\in B$.
\end{enumerate}}
\end{definition}
\medskip

In \cite{Ron} we constructed a functor from the category of dendriform algebras to the category of brace algebras, we recall this construction. Let $(A,\prec , {\tilde {\succ }})$ be a dendriform algebra, we denote: 
\begin{eqnarray*}
\omega _{\prec}(y_1,\dots ,y_i):=y_1\prec (y_2\prec \dots (y_{i-1}\prec y_i))
\\
\omega _{\tilde {\succ}}(y_{i+1},\dots y_n):=((y_{i+1}{\tilde {\succ}} y_{i+2})\ {\tilde {\succ}}\dots )
{\tilde {\succ}} y_n.
\end{eqnarray*}
The brace operations $M_{1n}$ are defined as follows:
$$M_{1n}(x;y_1,\dots ,y_n)=\sum _{i=0}^n (-1)^{n-i}\omega _{\prec}(y_1,\dots ,y_i){\tilde {\succ}}x\prec
\omega _{\tilde {\succ}}(y_{i+1},\dots y_n),\leqno $$
for $n\geq 1$.

Given any $q$-tridendriform algebra $(A,\prec ,\cdot ,\succ)$ we associate to it the brace algebra $(A, M_{1n})$ obtained from the dendriform algebra $(A,\prec ,{\tilde {\succ}}=q\cdot +\succ)$. 

\begin{proposition}\label{oubli}
If $(A,\prec ,\cdot ,\succ)$ is a $q$-tridendriform algebra, then $(A,M_{1n},\cdot )$ is a ${\mbox {\it GV}_q}$ algebra.
\end{proposition}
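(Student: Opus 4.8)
The plan is to verify the three clauses in the definition of a ${\mbox {\it GV}_q}$ algebra (Definition \ref{def:Gerstenhaver-Voronov}) one by one, and to see that only one of them carries any real content. The operations $M_{1n}$ are, by construction, the image of the dendriform algebra $(A,\prec ,{\tilde {\succ}})$, with ${\tilde {\succ}}=q\cdot +\succ $, under the dendriform-to-brace functor of \cite{Ron}; hence the brace relations of Definition \ref{def:Gerstenhaver-Voronov}(1) hold automatically and require no new argument. The product $\cdot $ is associative by axiom (4) of Definition \ref{def:TriDend}. Consequently the entire weight of the proof rests on the distributive relation, and that is where I would concentrate.

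First I would record how $\cdot $ interacts with the two dendriform operations, by extracting from axioms (4)--(7) of Definition \ref{def:TriDend} the three identities
\begin{align*}
(a\,{\tilde {\succ}}\,b)\cdot c &= a\,{\tilde {\succ}}\,(b\cdot c),\\
(a\cdot b)\prec c &= a\cdot(b\prec c),\\
a\cdot(b\,{\tilde {\succ}}\,c) &= (a\prec b)\cdot c + q\,a\cdot(b\cdot c).
\end{align*}
The first comes from (4) and (5), the second is exactly (7), and the third follows from (6) once ${\tilde {\succ}}$ is expanded as $q\cdot +\succ $. The decisive observation is that the weight $q$ occurs in the third identity alone: it is the unique mechanism by which a letter can be \emph{absorbed} into the central associative product, and each absorption costs one factor of $q$. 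This is the algebraic origin of the powers $q^{\,j-i}$ in the distributive law.

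With these relations in hand I would attack the distributive relation directly. Expanding $M_{1n}(x\cdot y;z_1,\dots ,z_n)$ through its defining formula and using the first two identities to move every ${\tilde {\succ}}$ onto $x$ and every $\prec $ onto $y$, the left-hand side becomes a signed sum of terms in which the letters are split between $x$ and $y$ with no interior factor. On the other side, expanding the braces $M_{1i}(x;\dots )$ and $M_{1(n-j)}(y;\dots )$ and applying the third identity repeatedly to the interior letters $z_{i+1},\dots ,z_j$ --- each application trading a ${\tilde {\succ}}$ for a central $\cdot $-factor at the cost of one power of $q$ --- collapses the weighted sum into the same signed family of terms. An induction on $n$ organises this matching; alternatively, since the identity is $\KK $-multilinear in its arguments and polynomial in $q$, it may be checked in the free $q$-tridendriform algebra ${\mbox {\it Tridend}_q}(V)$ of Examples \ref{extridend}, the specialisation $q=0$ being the homotopy $G$-algebra distributive law mentioned in the introduction.

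I expect the main obstacle to be the combinatorial and sign bookkeeping in this reassembly: one must reconcile the two alternating-sign expansions of $M_{1i}(x;\dots )$ and $M_{1(n-j)}(y;\dots )$ with the single expansion of $M_{1n}(x\cdot y;-)$, and confirm that the interior letters contribute exactly $q^{\,j-i}$. The delicate feature is that the interior $\cdot $-factors carry no brace, so they respond to the alternating structure differently from the $x$- and $y$-parts; controlling precisely this difference is what pins down the exponent of $q$.
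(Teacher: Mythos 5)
Your reduction is the paper's own: the brace axioms are inherited from the dendriform algebra $(A,\prec,{\tilde{\succ}})$ via the functor of \cite{Ron}, associativity of $\cdot$ is axiom (4), and the whole proposition is the distributive law; your three auxiliary identities are correct consequences of axioms (4)--(7) and are exactly the tools the paper uses. But the proposal stops where the proof starts. The sentence ``an induction on $n$ organises this matching,'' together with your closing admission that the sign and coefficient bookkeeping is the main obstacle, defers the entire content of the statement: the distributive law \emph{is} that bookkeeping, and it does not follow from the three identities by a routine induction. Note also that moving ${\tilde{\succ}}$ and $\prec$ through $x\cdot y$ turns the left-hand side into $\sum_{r}(-1)^{n-r}(\omega_\prec(z_1,\dots,z_r)\,{\tilde{\succ}}\,x)\cdot(y\prec\omega_{\tilde{\succ}}(z_{r+1},\dots,z_n))$, in which the prefix is attached to $x$ by a \emph{single} monomial of the brace $M_{1r}(x;\dots)$, not the whole brace; the right-hand side, by contrast, expands into full braces, so the two sides do not become ``the same signed family of terms'' until a large mass of off-diagonal terms on the right has been shown to cancel. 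Your fallback options do not help: checking the identity in the free $q$-tridendriform algebra is tautologically the same task (an identity holds in all algebras iff it holds in the free one), and the $q=0$ specialisation proves nothing for general $q$.

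What is missing is two explicit cancellation mechanisms, which is how the paper completes the argument. First, expanding $\sum_{i\le j}q^{j-i}M_{1i}(x;\dots)\cdot z_{i+1}\cdots z_j\cdot M_{1(n-j)}(y;\dots)$ into monomials, every monomial containing a genuinely central letter (the terms the paper calls $X_{r,i,j,l}$, with $j-i\ge 1$) occurs exactly twice: once from the summand with parameters $(i,j)$, with coefficient $q^{j-i}$ times a sign, and once from the summand $(i,j-1)$, with coefficient $q^{j-i-1}\cdot q$ times the opposite sign, the extra $q$ coming from the $q\,\cdot$ component of ${\tilde{\succ}}$. These pairs cancel, so the interior-letter terms vanish outright rather than ``collapsing'' into surviving terms as your sketch suggests. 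Second, for the remaining monomials one needs the telescoping identity $\sum_{i=r}^{l}(-1)^i\,\omega_{\tilde{\succ}}(z_{r+1},\dots,z_i)*\omega_\prec(z_{i+1},\dots,z_l)=0$ for $r<l$, which kills every off-diagonal term of the product of the two braces and leaves exactly $\sum_{r}(-1)^{n-r}\omega_\prec(z_1,\dots,z_r)\,{\tilde{\succ}}\,(x\cdot y)\prec\omega_{\tilde{\succ}}(z_{r+1},\dots,z_n)=M_{1n}(x\cdot y;z_1,\dots,z_n)$. Until you prove these two facts, or equivalents, the distributive law is asserted rather than established.
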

\begin{proof} We know that $(A,M_{1n})$ is a brace algebra,
therefore it suffices to prove that $\cdot$ and $M_{1n}$ satisfy the distributive relation:
$$\displaylines {
M_{1n}(x\cdot y;z_1,\dots ,z_n)=\hfill\cr
\hfill \sum _{0\leq i\leq j\leq n}q^{j-i}M_{1i}(x;z_1,\dots ,z_i)\cdot
z_{i+1}\cdot \dots \cdot z_j\cdot M_{1(n-j)}(y;z_{j+1},\dots ,z_n),\cr}$$
for $x,y,z_1,\dots ,z_n\in A$.
\medskip

As $\omega_\prec(v_1,\ldots,v_r)\cdot v= v_1\cdot(\omega_\prec(v_2,\ldots,v_r)\succ v)$, for any $v_1,\dots ,v_r,v\in A$, we can split the expression
\begin{eqnarray*}
\sum q^{j-i}M_{1i}(x;z_1,\ldots,z_i)\cdot z_{i+1}\cdot\ldots\cdot z_j\cdot  M_{1(n-j)}(y;z_{j+1},\ldots,z_n)\ ,
\end{eqnarray*}
in three types of terms:
$$\displaylines {
{\bold a)} X_{r,i,j,l}:=(\omega_\prec(z_1,\ldots,z_r)\ \tilde{\succ}\ x\prec \omega_{\tilde{\succ}}(z_{r+1},\ldots,z_i))\cdot z_{i+1}\cdot\ldots\cdot z_j \cdot \hfill\cr
\hfill(\omega_\prec(z_{j+1},\ldots,,z_l)\succ y\prec \omega_{\tilde{\succ}}(z_{l+1},\ldots,,z_n)),\cr }$$
with $j-i\geq 1$
$$\displaylines {
{\bold b)} Y_{r,i,l}:=(\omega_\prec(z_1,\ldots,z_r)\succ x\prec \omega_{\tilde{\succ}}(z_{r+1},\ldots,z_i))\cdot\hfill\cr
\hfill (\omega_\prec(z_{i+1},\ldots,,z_l)\succ y\prec \omega_{\tilde{\succ}}(z_{l+1},\ldots,,z_n)),\cr }$$
$$\displaylines {
{\bold c)} Z_{r,i,l}:=(z_1\cdot(\omega_\prec(z_2,\ldots,z_r)\succ x\prec \omega_{\tilde{\succ}}(z_{r+1},\ldots,z_i)))\cdot\hfill \cr
\hfill (\omega_\prec(z_{i+1},\ldots,,z_l)\succ y\prec \omega_{\tilde{\succ}}(z_{l+1},\ldots,,z_n)).\cr}$$
\medskip

For $j-i\geq 1$, the term $X_{r,i,j,l}$ appears in:
\begin{itemize}
\item $M_{1,i}(x;z_1,\ldots, z_i)\cdot z_{i+1}\cdot\ldots\cdot z_{j}\cdot M_{1, n-j}(y;z_{j+1},\ldots,z_n)$ with the coefficient $q^{j-i}(-1)^{i+l}$
\item $M_{1,i}(x; _1,\ldots,z_i)\cdot z_{i+1}\cdot\ldots\cdot z_{j-1}\cdot M_{1,n-j}(y;z_{j_1},\ldots,z_n)$ with the coefficient $q^{j-i-1}\cdot q\cdot(-1)^{i+l+1}$.
\end{itemize}
So, the coefficient of $X_{r,i,j,l}$ is $q^{j-1}[(-1)^{i+l}+(-1)^{i+l+1}]=0$, and therefore 
$$\displaylines {
\sum q^{j-i}M_{1,i}(x;z_1,\ldots,z_i)\cdot z_{i+1}\cdot\ldots\cdot z_j\cdot  M_{1,n-j}(y;z_{j+1},\ldots,z_n)=\hfill\cr
\hfill \sum _{0\leq r\leq i\leq l\leq n}(-1)^{r+l-i}Y_{r,i,l}+q\sum _{1\leq r\leq i\leq l\leq n}(-1)^{r+l-i}Z_{r,i,l}.\cr }$$

For $r<l$, we have that $Y_{r,i,l}=$
$$\displaylines {
((\omega_\prec(z_1,\ldots,z_r)\succ x)\prec(\omega_{\tilde{\succ}}(z_{r+1},\ldots,z_i)\star\omega_\prec(z_{i+1},\ldots,z_l)))\cdot(y\prec\omega_{\tilde{\succ}}(z_{l+1},\ldots,z_n))=\hfill\cr
\hfill (\omega_\prec(z_1,\ldots, z_r)\succ x)\cdot((\omega_{\tilde{\succ}}(z_{r+1},\ldots,z_i)\star\omega_\prec(z_{i+1},\ldots,,z_l))\succ(y\prec\omega_{\tilde{\succ}}(z_{l+1},\ldots,z_n))).\cr }$$
If $r<i$, then
$$\displaylines {
\omega_{\tilde{\succ}}(z_{r+1},\ldots;z_i)\star\omega_\prec(z_{i+1},\ldots,z_l)=\hfill\cr
\hfill \omega_{\tilde{\succ}}(z_{r+1},\ldots, z_{i+1})\prec\omega_\prec(z_{i+2},\ldots,z_l)+\omega_{\tilde{\succ}}(z_{r+1},\ldots,z_i)\prec\omega_\prec(z_{i+1},\ldots, z_l)'\cr }$$
 which implies that:
$$\displaylines {
\sum_{i=r}^l(-1)^i \omega_{\tilde{\succ}}(z_{r+1},\ldots,z_i)\star\omega_\prec(z_{i+1},\ldots,z_l)=\hfill\cr
\hfill (-1)^r(\omega_\prec(z_{r+1},\ldots, z_l)-z_{r+1}\prec\omega_\prec(z_{r+2},\ldots,z_l))=0,\cr }$$

Therefore, we get that $\sum_{i=r}^l (-1)^{r+l-i} Y_{r,i,l}=0\ ,$ for $r<l$. So,
$$\displaylines {
 \sum _{0\leq r\leq i\leq l\leq n}(-1)^{r+l-i}Y_{r,i,l}=\sum _{0\leq r\leq n}(-1)^rY_{r,r,r}=\hfill \cr
\hfill \sum _{0\leq r\leq n}(-1)^r\omega_\prec(z_1,\ldots,z_r)\succ(x\cdot y)\prec \omega_{\tilde{\succ}}(z_{r+1},\ldots,z_n).\cr }$$

Applying an analogous argument we get that 
$$\displaylines {
\sum _{1\leq r\leq i\leq l\leq n}(-1)^{r+l-i}Z_{r,i,l}=\sum _{1\leq r\leq n}(-1)^rZ_{r,r,r}=\hfill\cr
\hfill (\omega_\prec(z_1,\ldots,z_r)\cdot (x\cdot y)\prec\omega_{\tilde{\succ}}(z_{r+1},\ldots,z_n)).\cr }$$
We can conclude that:
$$\displaylines {
\sum_{0\leq i\leq j\leq n}q^{j-i}M_{1,i}(x;z_1,\ldots,z_i)\cdots z_{i+1}\cdot\ldots\cdot z_j\cdot M_{1,n-j}(y;Z_{j+1},\ldots,z_n)=\hfill\cr
\sum_{r=0}^{n}\omega_\prec(z_1,\ldots,,z_r)\tilde{\succ}(x\cdot y)\prec\omega_{\tilde{\succ}}(z_{r+1},\ldots, z_n)\cr
\hfill M_{1n}(x\cdot y; z_1,\ldots, z_n),\cr }$$
which ends the proof.
\end{proof}

Proposition \ref{oubli} states that there exists a functor ${\mathcal F}$ from the category of $q$-tridendriform algebras to the category of ${\mbox {\it GV}_q}$ algebras. Conversely, for a ${\mbox {\it GV}_q}$-algebra 
$(B,\tilde {M}_{1n},\ \cdot)$, let
\begin{eqnarray*}
U_{\qGV}(B):=\TriDend(B)/{\mathcal I}
\end{eqnarray*} 
where ${\mathcal I}$ is the tridendriform ideal spanned by the elements:
\begin{eqnarray*}
\tilde{M}_{1n}(x;y_1,\dots ,y_n)-\sum _{i=0}^n (-1)^{n-i}\omega _{\prec}(y_1,\dots ,y_i){\tilde {\succ}}x\prec
\omega _{\tilde {\succ}}(y_{i+1},\dots y_n),
\end{eqnarray*}
for all $x,y_1,\dots ,y_n\in B$.
A standard argument shows that $U_{\qGV}$ is a left adjoint of ${\mathcal F}$.
\medskip 

The following result shows that the subspace of primitive elements of $H$ is a ${\mbox {\it GV}_q}$-algebra.
\begin{lemma}\label{lemma:primitive_operation}
Let $(H,\prec,\ \cdot , \succ ,\Delta)$ a $q$-tridendriform bialgebra. If the elements $x,y, z_1,\cdots,z_n$ of $H$ are primitive, then $M_{1n}(x;z_1,\cdots,z_n)$ and $x\cdot y$ are primitive, too.  
\end{lemma}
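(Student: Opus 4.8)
The plan is to show that both operations are primitive by direct computation with the coproduct, using the bialgebra axioms (3)--(5) of the definition of $q$-tridendriform bialgebra together with the fact that $x,y,z_1,\dots,z_n$ are primitive. First I would handle the product $x\cdot y$, which is the base case and the cleaner of the two. Since $x,y$ are primitive, $\Delta(x)=x\otimes 1_{\KK}+1_{\KK}\otimes x$ and similarly for $y$. Plugging into axiom (4), $\Delta(x\cdot y)=\sum(x_{(1)}*y_{(1)})\otimes(x_{(2)}\cdot y_{(2)})$, the only surviving terms come from the boundary pairs of Sweedler components. Using the normalisation conventions $1_{\KK}\cdot x=x\cdot 1_{\KK}=0$, $1_{\KK}*x=x*1_{\KK}=x$, and the bialgebra establishment rules $(x*_q y)\otimes(1_{\KK}\cdot_q 1_{\KK}):=(x\cdot_q y)\otimes 1_{\KK}$, I expect exactly the two terms $(x\cdot y)\otimes 1_{\KK}+1_{\KK}\otimes(x\cdot y)$ to remain, so $x\cdot y$ is primitive.

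Next I would treat $M_{1n}(x;z_1,\dots,z_n)$ by induction on $n$, the case $n=0$ being $M_{10}=\Id_B$ so that $M_{10}(x)=x$ is primitive by hypothesis. For the inductive step, recall that $M_{1n}$ is defined through the dendriform operations $\prec$ and $\tilde{\succ}=q\cdot+\succ$ via
$$M_{1n}(x;z_1,\dots,z_n)=\sum_{i=0}^n(-1)^{n-i}\omega_\prec(z_1,\dots,z_i)\ \tilde{\succ}\ x\prec\omega_{\tilde{\succ}}(z_{i+1},\dots,z_n).$$
The natural strategy is to compute $\bar\Delta$ of the right-hand side using the bialgebra axioms (3) and (5) (which govern $\succ$ and $\prec$) together with axiom (4) to control $\tilde{\succ}=q\cdot+\succ$. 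Since every $z_k$ and $x$ is primitive, each application of $\Delta$ to $\omega_\prec$, $\omega_{\tilde{\succ}}$, and the outer $\prec,\tilde{\succ}$ splits into boundary terms; the essential point is that the reduced coproduct $\bar\Delta\bigl(M_{1n}(x;z_1,\dots,z_n)\bigr)$ should be expressible as a signed sum of tensors of smaller brace operations $M_{1i}\otimes M_{1(n-i)}$ applied to consecutive blocks of the $z_k$. The alternating signs $(-1)^{n-i}$, which are precisely engineered so that the brace relations hold, are what force all mixed terms to telescope and cancel, leaving only $M_{1n}(\dots)\otimes 1_{\KK}+1_{\KK}\otimes M_{1n}(\dots)$.

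I expect the main obstacle to be the bookkeeping in the inductive step: tracking how $\Delta$ interacts with the nested products $\omega_\prec$ and $\omega_{\tilde{\succ}}$ and verifying the cancellation of the cross terms. Rather than expand everything by hand, I would lean on the analogous computation in \cite{Ron} for the dendriform (brace) case, where the identity $\bar\Delta\bigl(M_{1n}(x;z_1,\dots,z_n)\bigr)$ is shown to vanish on primitives when the inputs are primitive; the $q$-tridendriform setting differs only in that $\tilde{\succ}=q\cdot+\succ$ replaces the dendriform $\succ$, and the coproduct axioms (3)--(5) are uniform across $\prec,\cdot,\succ$ (each multiplies the left factors by $*$). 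Consequently the proof of primitivity carries over essentially verbatim, with the weight $q$ appearing only as a scalar inside $\tilde{\succ}$ and not affecting the coalgebra bookkeeping. The combination of the base case $x\cdot y$ and the inductive verification for $M_{1n}$ then yields that $\mathrm{Prim}(H)$ is closed under both operations, as claimed.
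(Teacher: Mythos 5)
Your proposal is correct and follows essentially the same route as the paper: the primitivity of $x\cdot y$ by direct expansion of $\Delta(x\cdot y)$ using $1_{\KK}\cdot y=x\cdot 1_{\KK}=0$, and the primitivity of $M_{1n}(x;z_1,\dots,z_n)$ by observing that the brace operations come from the dendriform structure $(H,\prec,\tilde{\succ}=q\cdot+\succ,\Delta)$, which is a dendriform bialgebra, so the result of \cite{Ron} applies verbatim.
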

\begin{proof}
If $x$ and $y$ are primitive, then 
$$\Delta (x\cdot y)= x\cdot y \otimes 1_{\KK} + x\otimes (1_{\KK}\cdot y)+y\otimes (x\cdot 1_{\KK})+1_{\KK}\otimes x\cdot y=x\cdot y \otimes 1_{\KK} +1_{\KK}\otimes x\cdot y,$$
because $1_{\KK}\cdot y=x\cdot 1_{\KK}=0$.

To see that $M_{1n}(x;z_1,\cdots,z_n)$ is primitive, it suffices to note that the brace operation $M_{1n}$ on the $q$-tridendriform algebra $(H,\prec,\ \cdot , \succ )$ coincides with the brace defined on the dendriform algebra 
$(H,\prec,\ {\tilde {\succ}}:=q\cdot + \succ )$ in \cite{Ron}. Since $(H,\prec,{\tilde {\succ}} ,\Delta )$ is a dendriform bialgebra, it suffices to apply the result of \cite{Ron}.
\end{proof}
\bigskip

Let $(H, \prec, \cdot ,\succ ,\Delta)$ be a $q$-tridendriform bialgebra, we say that $H$ is connected if $(H_+,\Delta )$ is a connected coalgebra. For $n\geq 1$, define linear maps $\succ ^n:H^{\otimes n}\longrightarrow H$ and ${\overline {\Delta }}^n:H\longrightarrow H^{\otimes n}$ as follows:
\begin{eqnarray} \succ^1&=&\Id\\
\succ^n&=&\succ^{n-1}\circ(\Id^{\otimes n-2}\otimes\succ)\\
{\overline {\Delta}}^1&=&\Id\\
{\overline {\Delta}}^n&=&(\Id^{\otimes n-2}\otimes{\overline {\Delta}})\circ{\overline {\Delta}}^{n-1},\end{eqnarray}

Note that in this case $H={\overline {H_+}}$.  

Let $e_{tri}:H\longrightarrow H$ be the linear map given by $$e(x):=\sum_{n\geq 1}(-1)^{n+1}\succ^n\circ\bar\Delta^n(x).$$

For any element $x\in H$, we have that $e_{tri}(x)=x-\sum x_{(1)}\succ e_{tri}(x_{(2)})$, for ${\overline {\Delta }}(x)=\sum x_{(1)}\otimes x_{(2)}$. 
The previous equality implies that:
\begin{enumerate}\item If $x\in {\mbox {\it Prim}(H)}$, then $e_{tri}(x)=x$.
\item Whenever $x=y\succ z\in F_n(H)$ for elements $y,z\in F_r(H)$ with $r<n$, a recursive argument on $n$ shows that $e_{tri}(x)=0$.\end{enumerate}
So, we may consider $e_{tri}$ as a projection from $H$ to ${\mbox {\it Prim}(H)}$. Moreover, the Proposition below shows that any element $x\in H$ may be described in terms of the operation $\succ $ and primitive elements.

\begin{proposition}\label{structurefortridend} Let $(H, \prec, \cdot ,\succ ,\Delta)$ be a connected $q$-tridendriform bialgebra. Any element $x\in F_n(H)$ satisfies that:
$$\displaylines {
x=e_{tri}(x)+\sum e_{tri}(x_{(1)})\succ e_{tri}(x_{(2)}) +\dots +\sum \omega _{{\succ}}(e_{tri}(x_{(1)}),\dots ,e_{tri}(x_{(n)}))=\cr
\hfill \sum _{r=1}^n(\sum \omega _{{\succ}}(e_{tri}(x_{(1)}),\dots ,e_{tri}(x_{(r)}))),\cr }$$
where ${\overline {\Delta}}^r(x)=\sum x_{(1)}\otimes \dots \otimes x_{(r)}$ and 

\noindent $ \omega _{{\succ}}(e_{tri}(x_{(1)}),\dots ,e_{tri}(x_{(r)})):=(((e_{tri}(x_{(1)})\succ e_{tri}(x_{(2)}))\succ e_{tri}(x_{(3)}))\succ \dots )\succ e_{tri}(x_{(r)}).$
\end{proposition}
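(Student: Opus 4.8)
The plan is to induct on the filtration degree $n$, taking as the driving identity the fixed-point relation $x=e_{tri}(x)+\sum x_{(1)}\succ e_{tri}(x_{(2)})$ recorded just before the statement (where ${\overline {\Delta}}(x)=\sum x_{(1)}\otimes x_{(2)}$), together with the coassociativity of the reduced coproduct ${\overline {\Delta}}$ on the connected coalgebra $H_+$. For the base case $n=1$ one has $x\in{\mbox {\it Prim}(H)}$, so ${\overline {\Delta}}(x)=0$ and hence ${\overline {\Delta}}^r(x)=0$ for all $r\geq 2$, while $e_{tri}(x)=x$; the asserted sum then reduces to its single $r=1$ term $\omega_{\succ}(e_{tri}(x))=x$, as required.

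For the inductive step I would fix $x\in F_n(H)$ and start from $x=e_{tri}(x)+\sum x_{(1)}\succ e_{tri}(x_{(2)})$. Since $x\in F_n(H)$, each left factor $x_{(1)}$ lies in $F_{n-1}(H)$, so the induction hypothesis rewrites it as $x_{(1)}=\sum_{r=1}^{n-1}\sum\omega_{\succ}(e_{tri}((x_{(1)})_{(1)}),\dots,e_{tri}((x_{(1)})_{(r)}))$. Substituting this and using that $\omega_{\succ}$ is left-iterated, so that $\omega_{\succ}(a_1,\dots,a_r)\succ a_{r+1}=\omega_{\succ}(a_1,\dots,a_{r+1})$, each summand simply absorbs the extra argument $e_{tri}(x_{(2)})$ on the right. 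Coassociativity of ${\overline {\Delta}}$ then identifies $\sum(x_{(1)})_{(1)}\otimes\dots\otimes(x_{(1)})_{(r)}\otimes x_{(2)}$ with ${\overline {\Delta}}^{r+1}(x)$, turning the $r$-th contribution into $\sum\omega_{\succ}(e_{tri}(x_{(1)}),\dots,e_{tri}(x_{(r+1)}))$ with Sweedler indices now read off ${\overline {\Delta}}^{r+1}(x)$. Adding the $r=1$ term $e_{tri}(x)=\omega_{\succ}(e_{tri}(x))$ and reindexing $s=r+1$ produces exactly $\sum_{s=1}^{n}\sum\omega_{\succ}(e_{tri}(x_{(1)}),\dots,e_{tri}(x_{(s)}))$; the upper limit is $n$ because ${\overline {\Delta}}^{r}(x)=0$ for $r>n$ when $x\in F_n(H)$, which also guarantees that all sums are finite.

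The step that needs care rather than ingenuity is the coassociativity rewriting, namely that applying an iterated reduced coproduct to the left tensor factor and tensoring with $x_{(2)}$ reproduces the single higher reduced coproduct ${\overline {\Delta}}^{r+1}(x)$; this is standard for the conilpotent coalgebra $H_+$ and is precisely the Sweedler bookkeeping already performed for dendriform bialgebras in \cite{Ron}, to which I would defer for the detailed computation. The remaining point to check is bookkeeping of conventions: that the single right-multiplication by $\succ$ supplied by the fixed-point identity matches the left-iterated definition of $\omega_{\succ}$, and that each $e_{tri}(x_{(i)})$ is genuinely primitive, which follows from $e_{tri}$ being a projection onto ${\mbox {\it Prim}(H)}$ as already established.
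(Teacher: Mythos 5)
Your proposal is correct and follows essentially the same route as the paper: induction on the filtration degree, starting from the fixed-point identity $x=e_{tri}(x)+\sum x_{(1)}\succ e_{tri}(x_{(2)})$, substituting the inductive expression for $x_{(1)}\in F_{n-1}(H)$, and absorbing $e_{tri}(x_{(2)})$ into $\omega_{\succ}$ via its left-iterated definition together with coassociativity of ${\overline{\Delta}}$. Your write-up is in fact slightly more explicit than the paper's (base case, Sweedler bookkeeping, finiteness of the sums), but the argument is the same.
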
 

\begin{proof} Since $H$ is connected, an element $x\in F_n(H)$, for some $n \geq 1$. We have also that $x-e_{tri}(x)=\sum x_{(1)}\succ e_{tri}(x_{(2)})$. The result is clear for $n=1$.

For $n\geq 2$, ${\overline {\Delta }}(x)=\sum x_{(1)}\otimes x_{(2)}$, with $x_{(1)}$ and $x_{(2)}$ in $F_{n-1}(T)$. By a recursive argument, we get that
$$x_{(1)}=\sum _{r=1}^{n-1}(\sum \omega _{{\succ}}(e_{tri}(x_{(1)(1)}),\dots ,e_{tri}(x_{(1)(r)}))).$$
So, $$\displaylines {
x=e_{tri}(x)+\sum \bigl(\sum _{r=1}^{n-1}(\sum \omega _{{\succ}}(e_{tri}(x_{(1)(1)}),\dots ,e_{tri}(x_{(1)(r)}))\succ e_{tri}(x_{(2)}))\bigr)=\hfill\cr
\hfill \sum _{r=1}^n(\sum \omega _{{\succ}}(e_{tri}(x_{(1)}),\dots ,e_{tri}(x_{(r)}))),\cr }$$
which ends the proof.\end{proof}

\begin{remark}\label{omegas}{\rm (see \cite{Ron}) If the elements $x_1,\dots ,x_n$ belong to ${\mbox {\it Prim}(H)}$, then 
$$\Delta (\omega _{{\succ}}(x_1,\dots ,x_n))=\sum _{i=0}^n\omega _{{\succ}}(x_1,\dots ,x_i)\otimes \omega _{{\succ}}(x_{i+1},\dots ,x_n),$$
where $\omega _{{\succ}}(\emptyset):=1_{\KK}$.}\end{remark}

Note that Proposition \ref{structurefortridend} and Remark \ref{omegas} imply that for any connected $q$-tridendriform bialgebra $(H, \prec, \cdot ,\succ ,\Delta)$, the linear homomorphism from $(H_+,\Delta )$ to the cotensor coalgebra 
$T^c({\mbox {\it Prim}(H)})$ which sends an element $x\in F_n(H)$ to $\sum _{r=1}^n(\sum e_{tri}(x_{(1)(1)})\otimes \dots \otimes e_{tri}(x_{(1)(r)}))$ is an isomorphism of coalgebras.
\medskip

We have proved that the subspace of primitive elements of a $q$-tridendriform bialgebra has a natural structure of ${\mbox {\it GV}_q}$ algebra. In fact, there exists an equivalence between the category of connected $q$-tridendriform bialgebras and the category of ${\mbox {\it GV}_q}$ algebras. The last part of the section is devoted to this result.

\begin{proposition}
Let $V$ be a $\KK$-vector space. The primitive part of the free $q$-tridendriform algebra ${\mbox {\it Tridend}_q}(V)$ is the free ${\mbox {\it GV}_q}$ algebra over $V$.
\end{proposition}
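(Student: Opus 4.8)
The plan is to exhibit the primitive part $P:={\mbox {\it Prim}}({\mbox {\it Tridend}_q}(V))$, which is a ${\mbox {\it GV}_q}$ algebra by Lemma \ref{lemma:primitive_operation}, as the free ${\mbox {\it GV}_q}$ algebra on $V$. The one-vertex trees give an inclusion $V\cong \KK[T_1]\otimes V\hookrightarrow P$: these generators are primitive by the formula for $\Delta$ on corollas. Denoting by ${\mbox {\it GV}_q}(V)$ the free ${\mbox {\it GV}_q}$ algebra on $V$, this inclusion together with the universal property of ${\mbox {\it GV}_q}(V)$ produces a canonical ${\mbox {\it GV}_q}$-morphism $\alpha\colon {\mbox {\it GV}_q}(V)\to P$, and everything reduces to proving that $\alpha$ is an isomorphism.

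First I would pin down the enveloping algebra. By Proposition \ref{oubli} we have the functor ${\mathcal F}$ and its left adjoint $U_{\qGV}$, and ${\mathcal F}$ leaves underlying vector spaces unchanged; combining the two universal properties yields, for every $q$-tridendriform algebra $H$,
$$\Hom_{q\text{-}\mathrm{Trid}}(U_{\qGV}({\mbox {\it GV}_q}(V)),H)\cong \Hom_{q\text{-}\mathrm{GV}}({\mbox {\it GV}_q}(V),{\mathcal F}(H))\cong \Hom_{\KK}(V,H)\cong \Hom_{q\text{-}\mathrm{Trid}}({\mbox {\it Tridend}_q}(V),H).$$
By Yoneda this gives an isomorphism $\Theta\colon U_{\qGV}({\mbox {\it GV}_q}(V))\xrightarrow{\ \sim\ }{\mbox {\it Tridend}_q}(V)$ of $q$-tridendriform algebras, compatible with the inclusions of $V$. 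Transporting the coproduct of ${\mbox {\it Tridend}_q}(V)$ through $\Theta$ makes $U_{\qGV}({\mbox {\it GV}_q}(V))$ a connected $q$-tridendriform bialgebra in which the image of the generating space ${\mbox {\it GV}_q}(V)$ (the unit $j$ of the adjunction) consists of primitive elements; one then checks that $\Theta\circ j=\alpha$ by uniqueness of the extension from ${\mbox {\it GV}_q}(V)$, so that $\alpha$ is injective (resp.\ surjective) exactly when $j$ is.

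For surjectivity I would run an induction on the filtration degree using Proposition \ref{structurefortridend} and Remark \ref{omegas}. The coalgebra isomorphism ${\mbox {\it Tridend}_q}(V)_+\cong T^c(P)$ exhibits a direct-sum decomposition ${\mbox {\it Tridend}_q}(V)=\bigoplus_{r\geq 1}\omega_{\succ}(P,\dots,P)$, where $\omega_{\succ}(P^{\otimes r})$ is the span of iterated $\succ$-products of $r$ primitives. Letting $P'=\alpha({\mbox {\it GV}_q}(V))\subseteq P$ be the sub-${\mbox {\it GV}_q}$ algebra generated by $V$, the key point is that applying $\prec$, $\cdot$ and $\succ$ to $\omega_{\succ}$-words built from $P'$ stays, after projection by $e_{tri}$, inside $\omega_{\succ}$-words built from $P'$: this is precisely the content of the distributive law and of the identities expressing $M_{1n}$ through $\prec$ and ${\tilde {\succ}}=q\cdot+\succ$ recalled before Proposition \ref{oubli}. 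Since $V$ generates ${\mbox {\it Tridend}_q}(V)$ as a $q$-tridendriform algebra, this forces $\bigoplus_r\omega_{\succ}(P',\dots,P')$ to fill the whole space, and comparing with the above direct-sum decomposition gives $P'=P$.

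The hard part will be injectivity, that is, the analogue of the injective direction of Cartier--Milnor--Moore asserting that the generators ${\mbox {\it GV}_q}(V)$ do not collapse inside $U_{\qGV}({\mbox {\it GV}_q}(V))\cong{\mbox {\it Tridend}_q}(V)$. Here I would either establish a Poincar\'e--Birkhoff--Witt-type normal form for $U_{\qGV}$ showing it is cofreely cogenerated by its generating space, or match graded dimensions: the relation $1+\sum_n\dim({\mbox {\it Tridend}_q}(V))_n\,t^n=\bigl(1-\sum_n\dim P_n\,t^n\bigr)^{-1}$ read off from the cotensor-coalgebra isomorphism computes $\dim P$, and an independent operadic count of $\dim {\mbox {\it GV}_q}(V)$ (the free ${\mbox {\it GV}_q}$ algebra) combined with the already-proved surjectivity of $\alpha$ yields the isomorphism. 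This bookkeeping of how the tridendriform operations reduce on $\omega_{\succ}$-words of primitives, and how $e_{tri}$ of such products re-expresses them through $M_{1n}$ and $\cdot$ alone, is exactly the step that parallels \cite{Ron}, to which the details can be referred.
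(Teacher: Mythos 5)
Your skeleton — pass to the canonical ${\mbox {\it GV}_q}$-morphism $\alpha\colon {\mbox {\it GV}_q}(V)\to {\mbox {\it Prim}}({\mbox {\it Tridend}_q}(V))$, prove surjectivity from generation by $V$ together with the $\omega_{\succ}$-word decomposition, and prove injectivity by comparing graded dimensions — is the same as the paper's, and your computation of $\dim {\mbox {\it Prim}}({\mbox {\it Tridend}_q}(V))_n$ from the cotensor-coalgebra isomorphism (giving $C_{n-1}m^{n}$ when $\dim V=m$, with $C_n=\vert T_n\vert$ the super-Catalan number) is exactly the paper's first step. The identification $U_{\qGV}({\mbox {\it GV}_q}(V))\cong{\mbox {\it Tridend}_q}(V)$ by composing adjunctions is a clean, if inessential, addition.

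There is, however, a genuine gap precisely at the step you defer. The whole argument hinges on an \emph{independent} determination of $\dim {\mbox {\it GV}_q}(V)_n$, or equivalently on a normal form showing that the monomials of the free ${\mbox {\it GV}_q}$ algebra remain linearly independent inside ${\mbox {\it Tridend}_q}(V)$, and you offer this only as an either/or ("a PBW-type normal form \emph{or} an operadic count") without carrying out either branch. This cannot be cited away: the ${\mbox {\it GV}_q}$ operad is a new presentation (brace relations plus the $q$-distributive law), so neither its graded dimension nor the confluence of its rewriting rules follows from \cite{Ron}. The paper supplies exactly this missing content: it writes down the recursive spanning set ${\mbox {\it GV}_q}({\mathcal B})_n$ of monomials $M_{1n_1}(b_1;y_1^1,\dots ,y_{n_1}^1)\cdot\dots\cdot M_{1n_r}(b_r;y_1^r,\dots ,y_{n_r}^r)$ and an explicit bijection $\varphi$ with the set of decorated trees $t=\bigvee(\vert,t^2,\dots,t^r)$ (cardinality $C_{n-1}m^{n}$), given by $\varphi(M_{1n}(b;y_1,\dots,y_n))=(c_1,b)\prec\omega_{\succ}(\varphi(y_1),\dots,\varphi(y_n))$ and $\varphi(y_1\cdot\dots\cdot y_n)=\varphi(y_1)\cdot\dots\cdot\varphi(y_n)$; this simultaneously yields the count and exhibits the images of the monomials, modulo $e_{tri}$-corrections supported on trees with nontrivial leftmost subtree, as a linearly independent family. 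Without that bijection (or an executed PBW argument in its place), your injectivity step is an announcement rather than a proof; your surjectivity paragraph likewise asserts without verification that $\omega_{\succ}$-words in $\alpha({\mbox {\it GV}_q}(V))$ are stable under $\prec$, $\cdot$ and $\succ$, which is the tridendriform analogue of a computation in \cite{Ron} and is more forgivable to cite, but should at least be stated as the identity being invoked.
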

\begin{proof}
To prove the result we may assume that $V$ is a finite dimensional space over $\KK$, the general case follows by taking a direct limit. 

Suppose that ${\mbox {\it dim}_K(V)}=m$ and that ${\mathcal B}$ is a basis of $V$. We know that a basis for the space ${\mbox {\it Tridend}_q}(V)_n$, of homogeneous elements of degree $n$ of ${\mbox {\it Tridend}_q}(V)$, is given by the set $T_n\times {\mathcal B}^n$ whose cardinal is $C_nm^n$, where $C_n=\vert T_n\vert$ is the super-Catalan number.  But the vector space  ${\mbox {\it Tridend}_q}(V)$ is isomorphic to the tensor space $T({\mbox {\it Prim}}({\mbox {\it Tridend}_q}(V))$, which implies that the dimension of ${\mbox {\it Prim}}({\mbox {\it Tridend}_q}(V)_n)$ is $C_{n-1}m^{n}$. 

The paragraph above implies that there exists a bijection between the set $T_{n-1}\times {\mathcal B}^n$ of elements $(t;b_1,\dots ,b_n)\in T_n\times {\mathcal B}^n$ such that $t=\bigvee (\vert ,t^2,\dots ,t^r)$ and a basis of the space of primitive elements of ${\mbox {\it Tridend}}(V)$. Let $T_n^{\succ }$ denote the set of all trees in $T_n$ of the form $\bigvee (t^1,\dots ,t^r)$ with $\vert t^1\vert \geq 1$. We have that for any $t=\bigvee (\vert ,t^2,\dots ,t^r)$,
$e_{tri}((t;b_1,\dots ,b_n)=(t;b_1,\dots ,b_n) + z$ where $z$ belongs to the subspace spanned by $T_n^{\succ }\times {\mathcal B}^n$, which implies that the set of elements $e_{tri}((t;b_1,\dots ,b_n)$, with 
$t\in T_n^{\succ }$ form a basis of ${\mbox {\it Prim}}({\mbox {\it Tridend}_q}(V))$. 

On the other hand, the free ${\mbox {\it GV}_q}$ algebra ${\mbox {\it GV}_q}(V)$ spanned by $V$ has a basis ${\mbox {\it GV}_q}({\mathcal B})$ whose elements of degree $n$ may be described recursively as follows:
\begin{enumerate}\item ${\mbox {\it GV}_q}({\mathcal B})_1={\mathcal B},$
\item ${\mbox {\it GV}_q}({\mathcal B})_n$ is the set of all elements of the form $$M_{1n_1}(b_1;y_1^1,\dots ,y_{n_1}^1)\cdot \dots \cdot M_{1n_r}(b_r;y_1^r,\dots ,y_{n_r}^r),$$ where $b_1,\dots ,b_r\in {\mathcal B}$, $y_j^i\in 
{\mbox {\it GV}_q}({\mathcal B})_{n_{ij}}$, with $n_{ij}<n$, and $0\leq n_i$ for $1\leq i\leq r$.
\end{enumerate}
To end the proof it suffices to note that there exists a unique bijective map $\varphi$ from ${\mbox {\it GV}_q}({\mathcal B})$ to $T_n^{\succ }\times {\mathcal B}^n$ such that:\begin{enumerate}
\item $\varphi _1(b)=(c _1 ,b)$, for $b\in {\mathcal B}$,
\item $\varphi _m(M_{1n}(b;y_1,\dots ,y_n)=(c_1,b)\prec \omega _{\succ }(\varphi _{m_1}(y_1),\dots ,\varphi _{m_n}(y_n)),$
where $\omega _{\succ}(x_1,\dots ,x_n)=(((x_1\succ x_2)\succ x_3)\dots )\succ x_n$.
\item $\varphi _m(y_1,\dots ,y_n)=\varphi _{m_1}(y_1)\cdot \dots \cdot \varphi _{m_n}(y_n)$.\end{enumerate}
\medskip

Since ${\mbox {\it Prim}}({\mbox {\it Tridend}_q}(V))$ is a ${\mbox {\it GV}_q}$ algebra which contains $V$, it must be isomorphic to ${\mbox {\it GV}_q}(V)$.\end{proof}
\medskip

Applying the previous results we may show that the category of connected $q$-tridendriform bialgebras is equivalent to the category of $q$-Gertenhaber-Voronov algebras. 

\begin{theorem} Let $(H,\prec ,\cdot ,\succ )$ be a $q$-tridendriform bialgebra.\begin{enumerate}
\item If $H$ is connected then $H$ is isomorphic to the enveloping tridendriform algebra $U_{\qGV}({\mbox {\it Prim}}(H))$.
\item Any ${\mbox {\it GV}_q}$ algebra $B$ is isomorphic to the primitive algebra ${\mbox {\it Prim}}(U_{\qGV}(B))$ of its enveloping algebra.\end{enumerate}
\end{theorem}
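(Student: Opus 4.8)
The plan is to prove the two assertions of the equivalence by exploiting the adjunction between $U_{\qGV}$ and the forgetful functor ${\mathcal F}$ established above, together with the coalgebra isomorphism onto the cotensor coalgebra $T^c({\mbox {\it Prim}}(H))$ that follows from Proposition \ref{structurefortridend} and Remark \ref{omegas}. First I would treat part (1). Given a connected $q$-tridendriform bialgebra $H$, the inclusion ${\mbox {\it Prim}}(H)\hookrightarrow H$ is, by Lemma \ref{lemma:primitive_operation}, a morphism of ${\mbox {\it GV}_q}$ algebras; by the adjunction it extends to a unique morphism of $q$-tridendriform algebras $\Phi\colon U_{\qGV}({\mbox {\it Prim}}(H))\longrightarrow H$. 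I would then check that $\Phi$ is compatible with the coproducts, so that it becomes a morphism of $q$-tridendriform bialgebras, using that on generators it is the identity and that the coproduct formulas in Definition of the bialgebra express $\Delta$ of any product in terms of $\Delta$ on the factors.

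The heart of part (1) is showing $\Phi$ is bijective. Here I would use the coalgebra isomorphism $H_+\cong T^c({\mbox {\it Prim}}(H))$ noted after Proposition \ref{structurefortridend}: the element $x\in F_n(H)$ maps to $\sum_{r=1}^n\sum e_{tri}(x_{(1)(1)})\otimes\dots\otimes e_{tri}(x_{(1)(r)})$. Applying the same structure theorem to the connected bialgebra $U_{\qGV}({\mbox {\it Prim}}(H))$ (which is connected because it is built from $\TriDend(B)$, whose coproduct is easily seen to make it connected), one gets that both sides are isomorphic as coalgebras to $T^c({\mbox {\it Prim}}(H))$, and $\Phi$ intertwines these two isomorphisms because it is the identity on primitives and respects $\succ$ and $\Delta$. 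Hence $\Phi$ is an isomorphism of coalgebras, and being already a morphism of tridendriform algebras, it is an isomorphism of bialgebras. The filtration argument runs by induction on $n$: $\Phi$ restricts to an isomorphism $F_n\bigl(U_{\qGV}({\mbox {\it Prim}}(H))\bigr)\xrightarrow{\sim}F_n(H)$, the inductive step reducing to the identity on the associated graded $\bigoplus_r {\mbox {\it Prim}}(H)^{\otimes r}$.

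For part (2), I would start from an arbitrary ${\mbox {\it GV}_q}$ algebra $B$ and consider its enveloping algebra $U_{\qGV}(B)$, which is a connected $q$-tridendriform bialgebra. The preceding Proposition identifies ${\mbox {\it Prim}}(\TriDend(B))$ with the free ${\mbox {\it GV}_q}$ algebra; the same reasoning, now applied to the quotient $U_{\qGV}(B)=\TriDend(B)/{\mathcal I}$, shows that ${\mbox {\it Prim}}(U_{\qGV}(B))$ is the ${\mbox {\it GV}_q}$ algebra generated by $B$ subject only to the defining relations of $B$, namely $B$ itself. Concretely, the unit of the adjunction $B\to{\mbox {\it Prim}}(U_{\qGV}(B))$ is a morphism of ${\mbox {\it GV}_q}$ algebras, and I would show it is an isomorphism by a dimension or basis comparison in each degree: the recursive basis ${\mbox {\it GV}_q}({\mathcal B})$ described in the previous proof counts exactly the primitive elements of $U_{\qGV}(B)$ via the bijection $\varphi$ with $T_n^{\succ}\times{\mathcal B}^n$, and the relations in ${\mathcal I}$ are precisely what is needed to match the brace relations $\tilde M_{1n}$ with the $\omega_\prec,\omega_{\tilde\succ}$ expression.

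The main obstacle is verifying that $\Phi$ in part (1) genuinely respects the coproduct, i.e. that the extension $U_{\qGV}({\mbox {\it Prim}}(H))\to H$ furnished by the universal property of $U_{\qGV}$ as a tridendriform algebra is automatically a coalgebra morphism. The adjunction only guarantees an algebra morphism; compatibility with $\Delta$ must be argued separately. I expect this to follow because both coproducts are determined on the generating primitives (where they are primitive by construction) by the multiplicative compatibility conditions (3)–(5) in the definition of $q$-tridendriform bialgebra, so $\Phi\otimes\Phi\circ\Delta$ and $\Delta\circ\Phi$ agree on generators and hence everywhere by induction on the filtration; making this induction precise, rather than the purely formal adjunction, is where the real work lies.
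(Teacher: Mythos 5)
Your proposal is correct and follows essentially the same route as the paper: both rest on the coalgebra isomorphism $H_+\cong T^c({\mbox {\it Prim}}(H))$ from Proposition \ref{structurefortridend} together with the identification of ${\mbox {\it Prim}}({\mbox {\it Tridend}_q}(V))$ with the free ${\mbox {\it GV}_q}$ algebra, the only cosmetic difference being that you construct the comparison map as the adjunction counit $U_{\qGV}({\mbox {\it Prim}}(H))\to H$ while the paper writes down its inverse as a composition of the same isomorphisms. The compatibility with $\Delta$ that you single out as the real work is precisely what the paper compresses into ``straightforward to check.''
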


\begin{proof} We give the main line of the proof, for the details we refer to the analogous result for connected dendriform bialgebras proved in \cite{Ron}.

If $H$ is a connected $q$-tridendriform bialgebra, we know that $H$ is isomorphic as a coalgebra to $T^c({\mbox {\it Prim}}(H))$. To prove the first statement, it suffices to verify that the composition:
$$H\longrightarrow T^c({\mbox {\it Prim}}(H))\cong {\mbox {\it Tridend}_q}({\mbox {\it Prim}}(H))\longrightarrow U_{\qGV}({\mbox {\it Prim}}(H)),$$
is an isomorphism of $q$-tridendriform bialgebras, which is straightforward to check.
\medskip

For the second point, it is clear that $B\subseteq {\mbox {\it Prim}}(U_{\qGV}(B))$. On the other hand, we have that ${\mbox {\it Prim}}({\mbox {\it Tridend}_q}(B))={\mbox {\it GV}_q}(B)$. Since in enveloping algebra $U_{\qGV}(B)$ we identify the elements of ${\mbox {\it Prim}}({\mbox {\it Tridend}_q}(B))$ with elements of $B$, we get the result.\end{proof}

\section{Tridendriform structure on the spaces of parking functions and of multipermutations }

\subsection{Parking functions }
\medskip

In \cite{NovThi1}, J.-C. Novelli and J.-Y. Thibon defined a $1$-tridendriform structure on the space ${\mbox {\bf PQSym}^*}$ spanned by parking functions. We show that their result extends naturally to any $q\in \KK$, in such a way that the coalgebra structure on the parking functions gives a $q$-tridendriform bialgebra on ${\mbox {\bf PQSym}^*}$, for all $q$. Our main result is that the $q$-tridendriform bialgebra ${\mbox {\bf ST}(q)}$ is a sub-tridendriform bialgebra of ${\mbox {\bf PQSym}^*(q)}$. We begin by recalling some basic definitions about parking functions, for a more complete description we refer to \cite{NovThi1}.

\begin{definition}\label{def:Parking}
{\rm  A map $f: [n]\to [n]$ is called a \emph{$n$-non-decreasing parking function} if $f(i)\leq i $ for $1\leq i\leq n$. 
The set of $n$-non-decreasing parking functions is denoted by ${\mbox {\it NDPF}_n}$.

The composition $f:=f^{\uparrow}\circ \sigma$ of a non decreasing parking function 

\noindent $f^{\uparrow}\in NDPF_n$ and a permutation  $\sigma \in S_n$ is called a \emph{$n$-parking function}. The set of $n$-parking functions is denoted by ${\mbox {\it PF}}_n$.}
\end{definition}

Note that given a parking function $f=f^{\uparrow}\circ \sigma$, the non-decreasing parking function $f^{\uparrow}$ is uniquely determined but $\sigma$ is not unique. However, if
$r_i=\vert f^{-1}(i)\vert $, for $1\leq i\leq n$, then there exists a unique $(r_1,\dots ,r_n)$-shuffle $\sigma _0$ such that $f=f^{\uparrow}\circ \sigma_0^{-1}$

\begin{example}
In low dimensions, the sets ${\mbox {\it NDPF}_n}$ and ${\mbox {\it PF}_n}$ are described as follows:
\begin{enumerate}[$\bullet$]
\item ${\mbox {\it NDPF}_1}=\{(1)\}$, ${\mbox {\it NDPF}}_2=\{(1,2), (1,1)\}$,
 \item ${\mbox {\it NDPF}}_3=\{ (1,2,3), (1,1,2),(1,1,3),(1,2,2), (1,1,1)\}$,
\item ${\mbox {\it PF}_1}=\{ (1)\}$, ${\mbox {\it PF}_2}=\{ (1,2),(1,1), (2,1)\}$, 
\item ${\mbox {\it PF}_3}=
S_3\bigcup  (1,2,2)\circ {\mbox {\it Sh}(1,2)}^{-1}\bigcup (1, 1,2)\circ {\mbox {\it Sh}(2,1)}^{-1}
\bigcup (1, 1,3)\circ {\mbox {\it Sh}(2,1)}^{-1}\bigcup \{(1,1,1)\}$.
\end{enumerate}
\end{example}
\medskip

Recall that the cardinal of ${\mbox {\it NDPF}_n}$ is the Catalan number $c_n=\frac{(2n)!} {(n+1)! n!}$, while the number of elements of ${\mbox {\it PF}_n}$ is $(n+1)^{n-1}$.

 The map ${\mbox {\it Park}}:{\displaystyle \bigcup _{n\geq 1}}{\mathcal F}_{n}\longrightarrow {\displaystyle \bigcup _{n\geq 1}}{\mbox {\it PF}_n}$ (see \cite{NovThi0}) is defined as follows.
Let $f^{\uparrow }:[n]\longrightarrow [r]$ be a non-decreasing function, the element ${\mbox {\it Park}(f^{\uparrow })}$ is given by:
$${\mbox {\it Park}(f^{\uparrow})}(j):=\begin{cases}1,&\ {\rm for}\ j=1,\\
Min\{ {\mbox {\it Park}(f^{\uparrow})}(j-1))+f^{\uparrow}(j)-f^{\uparrow }(j-1), \ j\}, & \ {\rm for}\ j>1.\end{cases}$$

Suppose now that $f=f^{\uparrow}\circ \sigma$, where $f^{\uparrow }$ is a non-decreasing function and $\sigma $ is a permutation. Define
$${\mbox {\it Park(f)}}:={\mbox {\it Park}(f^{\uparrow})}\circ \sigma.$$ 

\subsubsection {\rm {\bf Remark.}}  {\rm Let $f\in {\mbox {\it PF}_n}$ be a parking function. It is easy to check that:}\begin{enumerate}
\item $f(i)=f(j)$ {\rm if, and only if} ${\mbox {\it Park}(f)}(i)={\mbox {\it Park}(f)(j)}$,
\item $f(i)<f(j)$ {\rm  if, and only if } ${\mbox {\it Park}(f)}(i)<{\mbox {\it Park}(f)}(j),$
\end{enumerate}
for $1\leq i,j\leq n$.

\medskip

There exists a natural embedding $\times_{\mathcal P}:{\mbox {\it PF}}_n\times {\mbox {\it PF}}_m\hookrightarrow {\mbox {\it PF}}_{n+m}$ given by:
$$f\times_{\mathcal P} g:=(f(1),\dots ,f(n),g(1)+n,\dots ,g(m)+n),\ {\rm for}\ f\in {\mbox {\it PF}}_n\  {\rm and}\ g\in {\mbox {\it PF}}_m.$$
Note that it is not the same that the one considered on ${\mbox {\bf ST}}$, which is denoted $\times$.
\bigskip

Let ${\mbox {\bf PQSym}^*}$ denote the vector space spanned by the set $ \bigcup _{n\geq 1}{\mbox {\it PF}_n}$ of parking functions. For any $q\in \KK$, we endow ${\mbox {\bf PQSym}^*}$ with a structure of $q$-tridendriform bialgebra, which extends the J.-C. Novelli and J.-Y. Thibon  construction of $1$-tridendriform bialgebra on this space.

The binary operations $\prec _q$, $\cdot _q$ and $\succ _q$ on ${\mbox {\bf PQSym}^*}$ are defined in a similar way that in the case of ${\mbox {\bf ST}}$:
$$\displaylines {
f \prec_q g :=\sum _{max(h)>max(k)}q^{\cap(h,k)}hk,\hfill\cr
f \cdot_q g :=\sum _{max(h)=max(k)}q^{\cap(h,k)-1}hk,\hfill\cr
f \succ_q g :=\sum _{max(h)<max(k)}q^{\cap(h,k)}hk,\hfill\cr }$$
where the sums are taken over all pairs of maps $(h,k)$ verifying that $hk$ is parking, ${\mbox {\it Park}(h)}=f $ and ${\mbox {\it Park}(k)}=g$, for $f,g \in \bigcup _{n\geq 1}{\mbox {\it PF}_n}$.

For example, if $f = (1,3,1)\in {\mbox {\it PF}_3}$ and $g =(1,1)\in {\mbox {\it PF}_1}$, then
\begin{eqnarray*}
f \prec_q g &=&(2,4,2,1,1)+(2,5,2,1,1)+(3,5,3,1,1)+ q((1,3,1,1,1)+ (1,4,1,1,1)+\\
&&(1,5,1,1,1))+(1,3,1,2,2)+ (1,4,1,2,2)+(1,4,1,3,3)+(1,5,1,2,2)+\\
&&(1,5,1,3,3)+(1,5,2,4,4),\\
f \succ_q g &=&(1,3,1,4,4),\\
f \cdot_q g &=&(1,3,1,3,3),\end{eqnarray*}

Applying the same arguments that in \cite{NovThi1} it is easily seen that $({\mbox {\bf PQSym}^*},~\prec _q,~\ \cdot_q,\succ_q)$ is a $q$-tridendriform algebra. We denote by ${\mbox {\bf PQSym}^*(q)}$ the space 
${\mbox {\bf PQSym}^*}$ endowed with the structure of $q$-tridendriform algebra.

Define a coproduct $\Delta $ on ${\mbox {\bf PQSym}^*}$ by setting for $f\in {\mbox {\it PF}_n}$ :
$$\Delta (f)=\sum _jf_{(1)}^j\otimes f_{(2)}^j,$$
where the sum is taken over all $0\leq j\leq n$ such that there exist $f_{(1)}^j\in {\mbox {\it PF}_j}$, $f_{(2)}^j\in {\mbox {\it PF}_{n-j}}$ and $\delta _j\in {\mbox {\it Sh}(j,n-j)^{-1}}$ with $f=(f_{(1)}^j\times_{\mathcal P} f_{(2)}^j)\circ \delta _j$.
Note that for any $0\leq j\leq n$, if the decomposition $f=(f_{(1)}^j\times_{\mathcal P} f_{(2)}^j)\circ \delta _j$ exists, then the elements $f_{(1)}^j$, $f_{(2)}^j$ and $\delta _j$ are unique.

For example, $$\displaylines {
\Delta ((1,5,5,3,6,2,3))= (1,5,5,3,6,2,3)\otimes 1_{\KK} + (1,3,2,3)\otimes (1,1,2) +\hfill\cr
\hfill (1,2)\otimes (3,3,1,4,1)+(1)\otimes (4,4,2,5,1,2)+1_{\KK}\otimes (1,5,5,3,6,2,3).\cr }$$ 

\begin{proposition} The $q$-tridendriform algebra ${\mbox {\bf PQSym}^*(q)}$, equipped with $\Delta$ is a $q$-tridendriform bialgebra.
\end{proposition}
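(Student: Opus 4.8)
The plan is to reduce the statement to a single combinatorial lemma describing the coproduct of a concatenated parking function, exactly as in the case of ${\mbox {\bf ST}(q)}$ treated after Examples~\ref{extridend}. First, axioms (1) and (2) are immediate: the definition of $\Delta$ contributes the terms $f\otimes 1_{\KK}$ (for $j=n$) and $1_{\KK}\otimes f$ (for $j=0$), so $\Delta (1_{\KK})=1_{\KK}\otimes 1_{\KK}$ and the counit conditions hold. Since $({\mbox {\bf PQSym}^*(q)},\prec _q,\cdot _q,\succ _q)$ is already known to be a $q$-tridendriform algebra, everything reduces to the three compatibility relations (3), (4), (5) between $\Delta$ and $\prec _q,\cdot _q,\succ _q$. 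Because each of these products is a sum of terms $q^{\cap(h,k)}hk$ (with exponent $\cap(h,k)-1$ in the $\cdot _q$ case) over pairs $(h,k)$ with $hk$ parking, ${\mbox {\it Park}(h)}=f$ and ${\mbox {\it Park}(k)}=g$, selected according to the sign of $max(h)-max(k)$, it suffices to compute $\Delta (hk)$ for each such concatenated parking function and regroup.

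The key lemma is the parking analogue of the three displayed identities verified for ${\mbox {\bf ST}(q)}$: for a parking function $hk$ with ${\mbox {\it Park}(h)}=f$ and ${\mbox {\it Park}(k)}=g$,
\[
\Delta (hk)=\sum h_{(1)}k_{(1)}\otimes h_{(2)}k_{(2)},
\]
where the sum runs over the decompositions obtained by cutting $h$ and $k$ simultaneously at a common value threshold $t$; here $h_{(1)}$ (resp. $k_{(1)}$) is the subword of $h$ (resp. $k$) on the positions of value $\le t$, and $h_{(2)}$, $k_{(2)}$ the complementary subwords shifted down by $t$. I would prove this using the fact that concatenation places all positions of $h$ before those of $k$ without modifying values, so that a cut of $hk$ at threshold $t$ distributes the low and high positions of $h$ and of $k$ independently; this is precisely $h_{(1)}k_{(1)}\otimes h_{(2)}k_{(2)}$ after re-parking. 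The crucial point is that the common shift by $t$ gives $max(h_{(2)})-max(k_{(2)})=max(h)-max(k)$ whenever both high parts are non-empty, while the boundary cuts (where one high part is empty) are exactly those governed by the unit conventions $1_{\KK}\succ x=x=x\prec 1_{\KK}$; hence the sign of $max(h_{(2)})-max(k_{(2)})$ always matches that of $max(h)-max(k)$.

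I would then record the additivity $\cap(h,k)=\cap(h_{(1)},k_{(1)})+\cap(h_{(2)},k_{(2)})$: a value $v\in {\mbox {\it Im}(h)}\cap {\mbox {\it Im}(k)}$ is sent entirely to the low parts when $v\le t$ and entirely to the high parts when $v>t$, so no shared value is split by the common threshold. Consequently $q^{\cap(h,k)}=q^{\cap(h_{(1)},k_{(1)})}\,q^{\cap(h_{(2)},k_{(2)})}$, and in the $\cdot _q$ case the single subtracted $1$ is carried by the factor in which the top values of $h$ and $k$ coincide.

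Finally I would assemble the three relations. For $\succ _q$, applying $\Delta$ to $f\succ _q g=\sum_{max(h)<max(k)}q^{\cap(h,k)}hk$, inserting the lemma, and using the additivity of $\cap$ turns the result into a double sum over cuts; regrouping the first tensor slot over all relative orders of $max(h_{(1)})$ and $max(k_{(1)})$ yields $x_{(1)}*y_{(1)}$ (recall $*=\prec _q+q\,\cdot _q+\succ _q$ sums over all admissible pairs with weight $q^{\cap}$), while the second slot retains $max(h_{(2)})<max(k_{(2)})$, i.e. $x_{(2)}\succ _q y_{(2)}$, giving relation (3); relations (4) and (5) follow verbatim. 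The main obstacle is the key lemma: making rigorous that the value-threshold description of $\Delta$ commutes with concatenation and simultaneously preserves the comparison of maxima and the intersection count. This is mildly more delicate than for ${\mbox {\bf ST}(q)}$ because the parking coproduct involves re-parkization (subtracting $t$ and renormalizing) rather than a plain co-restriction, so one must check that the cut pieces are genuine parking functions and that the correspondence between cuts of $hk$ and pairs of factors of $\Delta (f)$ and $\Delta (g)$ is a well-defined bijection.
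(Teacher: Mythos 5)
Your proposal is correct and follows essentially the same route as the paper: it reduces the claim to the compatibility of $\Delta$ with each of $\prec_q,\cdot_q,\succ_q$, and establishes the bijection between value-threshold cuts of a concatenation $hk$ (with ${\mbox {\it Park}}(h)=f$, ${\mbox {\it Park}}(k)=g$) and pairs of terms of $\Delta(f)$ and $\Delta(g)$, together with the preservation of the comparison of maxima and the additivity $\cap(h,k)=\cap(h_{(1)},k_{(1)})+\cap(h_{(2)},k_{(2)})$. Your explicit treatment of the boundary cuts via the unit conventions and of the single $q^{-1}$ in the $\cdot_q$ case is a point the paper leaves implicit, but the argument is the same.
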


\begin{proof} Let us see that 
$$\Delta (f\succ _q g)=\sum (f_{(1)}*_qg_{(1)})\otimes (f_{(2)}\succ _qg_{(2)}),$$
for $f\in {\mbox {\it PF}_n}$ and $g\in {\mbox {\it PF}_m}$. The other relations may be verified in a similar way. 

Let $h\in {\mathcal F}_n$ and $k\in {\mathcal F}_m$ be such that $hk\in {\mbox {\it PF}_{n+m}}$, ${\mbox {\it Park}(h)}=f $, ${\mbox {\it Park}(k)}=g$ and ${\mbox {\it max}(h)}<{\mbox {\it max}(k)}$. Suppose that for $0\leq j\leq n+m$, the function $hk$ may be written as:
$$hk=((hk)_{(1)}^j\times_{\mathcal P} (hk)_{(2)}^j)\circ \delta _j,$$
with $(hk)_{(1)}^j\in {\mbox {\it PF}_{j}}$, $(hk)_{(2)}^j\in {\mbox {\it PF}_{n+m-j}}$ and $\delta _j\in {\mbox {\it Sh}(j,n+m-j)^{-1}}.$

Then there exists a unique integer $0\leq r\leq j$ such that $(hk)_{(1)}^j=h_{(1)}^rk_{(1)}^{j-r}$, $(hk)_{(2)}^j=h_{(2)}^rk_{(2)}^{j-r}$, and $\delta _j=(\delta _j^1\times \delta _j^2)\cdot \gamma$, with $\delta _j^1\in
{\mbox {\it Sh}(r,n-r)^{-1}}$, $\delta _j^2\in {\mbox {\it Sh}(j-r,m+r-j)^{-1}}$ and $\gamma \in {\mbox {\it Sh}(n,m)^{-1}}$. 
In this case we have that $f=({\mbox {\it Park}(h_{(1)}^r)}\times_{\mathcal P} {\mbox {\it Park }(h_{(2)}^r)})\circ\delta _j^1$ and $g=({\mbox {\it Park}(k_{(1)}^{j-r})}\times_{\mathcal P} {\mbox {\it Park }(k_{(2)}^{j-r})})\circ\delta _j^2$.
Finally, it is easy to see that ${\mbox {\it max}(h_{(2)}^r)}<{\mbox {\it max}(k_{(2)}^{j-r})}$.
So, to any term in $\Delta (f\succ _q g)$ corresponds a term in $(*\times \succ _q)\circ (\Delta \times \Delta )(f\otimes g)$.
\medskip
 
 Conversely, suppose that $f=(f_{(1)}^r\times_{\mathcal P} f_{(2)}^r)\circ \delta _r$ and $g=(g_{(1)}^l\times_{\mathcal P} g_{(2)}^l)\circ \gamma _l$, for parking functions $f_{(1)}^r,\ f_{(2)}^r,\  g_{(1)}^l,\ g_{(2)}^l$ and permutations $\delta _r\in {\mbox {\it Sh}(r,n-r)^{-1}}$ and $\gamma _l\in {\mbox {\it Sh}(l,m-l)^{-1}}$. 
 Let $h_1\in {\mathcal F}_r$, $h_2\in {\mathcal F}_{n-r}$, $k_1\in {\mathcal F}_l$ and $k_2\in {\mathcal F}_{m-l}$ be such that:\begin{enumerate}
 \item $h_1k_1\in {\mbox {\it PF}_{r+l}}$ and $h_2k_2\in {\mbox {\it PF}_{n+m-r-l}}$,
 \item ${\mbox {\it Park}(h_i)}=f_{(i)}^r$ and ${\mbox {\it Park}(k_i)}=g_{(i)}^l$, for $i=1,2$,
 \item ${\mbox {\it max}(h_{(2)})}<{\mbox {\it max}(k_{(2)})}$.\end{enumerate}
 The elements $h=(h_1\times_{\mathcal P} h_2)\circ \delta _r\in {\mathcal F}_n$ and $k=(k_1\times_{\mathcal P} k_2)\circ \gamma _l\in {\mathcal F}_m$ verify that $hk\in {\mbox {\it PF}_{n+m}}$, ${\mbox {\it Park}(h)}=f$, ${\mbox {\it Park}(k)}=g$ and 
 ${\mbox {\it max}(h)}<{\mbox {\it max}(k)}$.
\end{proof}

Note that any surjective map from $\{1,\dots ,n\}$ to $\{ 1,\dots ,r\}$ is a parking function. There exist a natural map from ${\mbox {\it PF}_n}$ to ${\mbox {\bf ST}_n}$ given by $f\mapsto {\mbox {\it std}(f)}$ which is surjective but not injective, and coincides with the identity map on ${\mbox {\bf ST}_n}$. The linear map $\alpha _n: \KK[{\mbox {\bf ST}_n}]\longrightarrow \KK[{\mbox {\it PF}_n}]$ given by 
$$\alpha _n(f)=\sum _{h\in {\mbox {\it PF}_n}\vert {\mbox {\it std}(h)}=f}h,$$ is a monomorphism, for $n\geq 1$.

\begin{theorem} The bialgebra ${\mbox {\bf ST}(q)}$ is a sub-$q$-tridendriform bialgebra of ${\mbox {\bf PQSym}^*(q)}$.\end{theorem}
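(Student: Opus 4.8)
The strategy is to prove that the linear map $\alpha:=\bigoplus_{n\geq 1}\alpha_n$ is a morphism of $q$-tridendriform bialgebras from ${\mbox{\bf ST}(q)}$ to ${\mbox{\bf PQSym}^*(q)}$. Since each $\alpha_n$ is a monomorphism, this exhibits ${\mbox{\bf ST}(q)}$ as a sub-bialgebra, which is precisely the assertion. Hence it suffices to check two things: that $\alpha$ intertwines each of the products $\succ_q,\ \cdot_q,\ \prec_q$, and that $\alpha$ is a morphism of coalgebras. Two elementary properties of standardization carry most of the argument. First, because $\Park$ preserves both the coincidences and the strict order among the values of a function (the Remark on $\Park$), standardizing a function and its parkization give the same surjection, i.e. $\std\circ\Park=\std$ on each $\mathcal{F}_n$. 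Second, since $\std$ records only the relative order of the letters, it commutes with passing to a subsequence: $\std(w\vert_J)=\std((\std w)\vert_J)$ for every subset $J$ of positions.

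For the products I would expand $\alpha(f)\succ_q\alpha(g)$ directly. A typical term is $q^{\cap(a,b)}\,ab$, where $ab$ is parking with $\max(a)<\max(b)$ and $\Park(a)=u$, $\Park(b)=v$ for parking functions $u,v$ with $\std(u)=f$, $\std(v)=g$. Applying $\std\circ\Park=\std$, the conditions on $u,v$ collapse to $\std(a)=f$, $\std(b)=g$, so the sum runs over all pairs $(a,b)$ with $ab$ parking, $\std(a)=f$, $\std(b)=g$ and $\max(a)<\max(b)$. On the other hand $\alpha(f\succ_q g)$ is $\sum q^{\cap(h,k)}\sum_{\std(w)=hk}w$ over pairs $(h,k)$ with $hk$ surjective, $\std(h)=f$, $\std(k)=g$, $\max(h)<\max(k)$. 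Setting $w:=ab$ and $hk:=\std(w)$ identifies the two index sets: the surjection $hk$ and its two blocks $h,k$ are recovered from $w$, the restriction-compatibility of $\std$ gives $\std(a)=\std(h)=f$ and $\std(b)=\std(k)=g$, order-preservation transports $\max(h)<\max(k)$ to $\max(a)<\max(b)$, and the value order-isomorphism $w\leftrightarrow hk$ matches the shared-value sets between the two blocks, so $\cap(a,b)=\cap(h,k)$. Thus the two expressions coincide term by term; the cases $\cdot_q$ (condition $\max=$, exponent $\cap-1$) and $\prec_q$ (condition $\max>$) are identical.

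The real content is the coproduct, because $\Delta$ on ${\mbox{\bf ST}}$ and on ${\mbox{\bf PQSym}^*}$ are defined differently: the former sums over all value-thresholds $s\in\{0,\dots,r\}$ and standardizes the upper co-restriction, while the latter sums only over the thresholds $j'$ of a parking function $h$ at which $|\{i:h(i)\le j'\}|=j'$ and merely shifts the upper block down by $j'$. I would reconcile them through a reconstruction bijection. Fix $f\in{\mbox{\bf ST}_n}$ surjective onto $[r]$. For $h$ parking with $\std(h)=f$, an admissible threshold $j'$ corresponds to the value-threshold $s$ of $f$ equal to the number of distinct values of $h$ that are $\le j'$; using the order- and restriction-compatibility of $\std$, the two factors of $\Delta_{{\mbox{\bf PQSym}}}(h)$ at $j'$ are parking functions $\phi,\psi$ with $\std(\phi)=f\vert^{[s]}$ and $\std(\psi)=\std(f\vert^{\{s+1,\dots,r\}})$, and $j'=|\phi|$.

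Conversely, given $s$ and any such compatible pair $(\phi,\psi)$, I reconstruct $h$ by writing the letters of $\phi$ on the positions $\{i:f(i)\le s\}$ and the letters of $\psi$ shifted up by $j':=|\phi|$ on the positions $\{i:f(i)>s\}$. One checks directly that $\std(h)=f$, that $h$ is a parking function via the count-characterization $|\{i:h(i)\le t\}|\ge t$ inherited from $\phi$ and $\psi$, and that $j'$ is admissible; since the length $|f\vert^{[s]}|$ is strictly increasing in $s$ (as $f$ is surjective), $s$ and hence $h$ are uniquely determined, giving the required bijection. Summing $\Delta_{{\mbox{\bf PQSym}}}(h)$ over all $h$ with $\std(h)=f$ therefore reproduces $\sum_{s=0}^r\alpha(f\vert^{[s]})\otimes\alpha(\std(f\vert^{\{s+1,\dots,r\}}))=(\alpha\otimes\alpha)\Delta_{{\mbox{\bf ST}}}(f)$. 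This verification that the reconstructed $h$ is a genuine parking function with $j'$ admissible, and that the correspondence is bijective, is the main obstacle; the product compatibilities are then routine, and injectivity of the $\alpha_n$ finishes the proof.
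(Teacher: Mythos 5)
Your proposal is correct and takes essentially the same route as the paper: the product compatibilities are checked term by term using $\std\circ\Park=\std$ together with the compatibility of standardization with restriction (giving $\cap(a,b)=\cap(h,k)$), and the coproduct compatibility comes from the same bijection between admissible splittings of a parking function $h$ with $\std(h)=f$ and pairs consisting of a splitting of $f$ and lifts of its two factors. The only cosmetic difference is that you phrase the coproduct bijection via co-restrictions and admissible thresholds, while the paper writes it with the shuffle decompositions $h=(h_{(1)}^r\times_{\mathcal P}h_{(2)}^r)\circ\delta_r$ and $f=(f_{(1)}^r\times f_{(2)}^r)\circ\delta_r$.
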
 

\begin{proof} Let $f\in {\mbox {\bf ST}_n}$ and $g\in {\mbox {\bf ST}_m}$. 
Given $u\in {\mbox {\it PF}_{n+m}}$ there exist unique functions $u_1\in {\mathcal F}_n$ and $u_2\in {\mathcal F}_m$ such that $u=u_1u_2$, and unique functions $h\in {\mathcal F}_n$ and $k\in  {\mathcal F}_m$ such 
that ${\mbox {\it std}(u)}=hk$. Moreover, we have that ${\mbox {\it std}(u_1)}={\mbox {\it std}(h)}$ and ${\mbox {\it std}(u_2)}={\mbox {\it std}(k)}$.

Note that $$\alpha _{n+m}(f\succ _q g)=\sum _{u\in {\mbox {\it PF}_{n+m}}}q^{\cap(h,k)}u,$$
where the sum is extended over all the functions $u$ such that ${\mbox {\it std}(u)}=hk$, with ${\mbox {\it std}(h)}=f$, ${\mbox {\it std}(k)}=g$ and ${\mbox {\it max}(h)}<{\mbox {\it max}(k)}$.

On the other hand, $$\alpha _n(f)\succ _q \alpha _m(g)=\sum _{u\in {\mbox {\it PF}_{n+m}}}q^{\cap(u_1,u_2)}u,$$
where the sum is extended over all the functions $u=u_1u_2$ such that ${\mbox {\it std}({\mbox {\it Park }(u_1)})}=f$, ${\mbox {\it std}({\mbox {\it Park }(u_2)})}=g$ and ${\mbox {\it max}(u_1)}<{\mbox {\it max}(u_2)}$. 

It is immediate to check that:\begin{enumerate}
\item ${\mbox {\it std}({\mbox {\it Park }(u_i)})}={\mbox {\it std}(u_i)}$, for $i=1,2$,
\item if $u=u_1u_2$ and ${\mbox {\it std}(u)}=hk$, then ${\cap(h,k)}={\cap(u_1,u_2)}$,
\item if ${\mbox {\it std}(u_1)}=f$ and ${\mbox {\it std}(u_2)}=g$, then ${\mbox {\it std}(u_1u_2)}=hk$ with ${\mbox {\it std}(h)}=f$ and ${\mbox {\it std}(k)}=g$,
\item if ${\mbox {\it std}(u_1u_2)}=hk$, then ${\mbox {\it max}(h)}<{\mbox {\it max}(k)}$ if, and only if, ${\mbox {\it max}(u_1)}<{\mbox {\it max}(u_2)}$.
\end{enumerate}
We may conclude that $\alpha _{n+m}(f\succ _q g)=\alpha _n(f)\succ _q \alpha _m(g)$. 

 Similar arguments show that $\alpha _{n+m}(f\cdot _q g)=\alpha _n(f)\cdot _q \alpha _m(g)$ and $\alpha _{n+m}(f\prec_q g)=
\alpha _n(f)\prec _q \alpha _m(g)$.

So, ${\mbox {\bf ST}}$ is a $q$-tridendriform subalgebra of ${\mbox {\bf PQSym}^*}$.
\medskip

To prove that $\alpha $ is a coalgebra homomorphism, suppose that $h\in {\mbox {\it PF}_n}$ and $0\leq r\leq n$ are such that ${\mbox {\it std}(h)}=f$ and 
$$h=(h_{(1)}^r\times_{\mathcal P} h_{(2)}^r)\circ \delta _r,\ {\rm for}\ h_{(1)}^r\in {\mbox {\it PF}_r},\ h_{(2)}^r\in {\mbox {\it PF}_{n-r}}\ {\rm and}\ \delta _r\in {\mbox {\it Sh}(r,n-r)^{-1}}.$$
Let $f_{(1)}^r:={\mbox {\it std}(h_{(1)}^r)}$ and $f_{(2)}^r:={\mbox {\it std}(h_{(2)}^r)}$, we get that $f= (f_{(1)}^r\times f_{(2)}^r)\circ \delta _r$.
\medskip

Conversely, suppose that $f= (f_{(1)}^r\times f_{(2)}^r)\circ \delta _r$, for some $f_{(1)}^r\in {\mbox {\bf ST}_r}$, $f_{(2)}^r\in {\mbox {\bf ST}_{n-r}}$ and $ \delta _r\in {\mbox {\it Sh}(r,n-r)^{-1}}$. 

\noindent Given elements $h_{(1)}^r\in
{\mbox {\it PF}_r}$ and $h_{(2)}^r\in {\mbox {\it PF}_{n-r}}$, the element $h:=(h_{(1)}^r\times_{\mathcal P} h_{(2)}^r)\circ \delta _r\in {\mbox {\it PF}_n}$ verifies that ${\mbox {\it std}(h)}=f$.

The arguments above imply that:
$$\displaylines {
\Delta (\alpha _n(f))=\sum _{{\mbox {\it std}(h)}=f}\Delta (h)=\sum _{{\mbox {\it std}(h)}=f}(\sum_r h_{(1)}^r\otimes h_{(2)}^r)=\hfill\cr
\hfill \sum _r(\sum _{{\mbox {\it std}(h_{(i)}^r)}=f_{(i)}^r}h_{(1)}^r\otimes h_{(2)}^r)=\sum _r \alpha _r(f_{(1)}^r)\otimes \alpha _{n-r}(f_{(2)}^r),\cr }$$
which proves that $\alpha $ is a coalgebra homomorphism. \end{proof}
\medskip

Clearly, since any surjective map is a parking function, there exists the natural inclusion homomorphism $\iota : {\mbox {\bf ST}}\hookrightarrow {\mbox {\bf PQSym}^*}$, but $\iota $ is not a coalgebra homomorphism. For instance, the element $(1,1,2)$ is primitive in ${\mbox {\bf PQSym}^*}$. An element $x\in  {\mbox {\bf ST}_n}$ is such that $\iota_n (x)=\alpha_n(x)$ if, and only if, $x$ is a permutation.
\medskip

Note that $( {\mbox {\bf PQSym}^*},\times_{\mathcal P} )$ is an associative algebra, too. If we denote by ${\mbox {\it PIrr}_n}$ the subset of $\bigcup _{n\geq 1}{\mbox {\it PF}_n}$ of all parking functions $f$ such that there do not exist $f_1\in
{\mbox {\it PF}_i}$ and $f_2\in{\mbox {\it PF}_{n-i}}$ with $f=f_1\times_{\mathcal P} f_2$ and $1\leq i\leq n-1$. So, as a vector space ${\mbox {\bf PQSym}^*}$ is isomorphic to $T(\KK [\bigcup _{n\geq 1}{\mbox {\it PIrr}_n}])$, which implies that the space of primitive elements of ${\mbox {\bf PQSym}^*}$ of degree $n$ has dimension ${\mbox {\it PIrr}_n}$, for $n\geq 1$.

\subsection{Multipermutations}

In \cite{LamPyl}, T. Lam and P. Pylyavskyy  define a {\it big multi-permutation} or ${\mathcal M}$-{\it permutation} of $n$ as an ordered partition $(B_1,\dots ,B_m)$ of $n$ such that if an element $i$, $1\leq i\leq n-1$, belongs to the block $B_j$, then $i+1\notin B_j$. The set of ${\mathcal M}$-permutations of $n$ is denoted $S_n^{\mathcal M}$.
\medskip

The element $B=[(1,4,6), (2,7), (3,5)]$ is a ${\mathcal M}$-permutation of $7$, while $D=[(1,6,7), (2,3), 5, 4]$ is not.

Let $W=(W_1,\dots ,W_r)$ be an ordered partition of $n$, the  ${\mathcal M}$-standardization of $W$ is the big multi-permutation ${\mbox {\it std}_{\mathcal M}}(W)$ obtained by:
\begin{enumerate}\item delete $i+1$ if both $i$ and $i+1$ belong to the same block $W_j$,
\item if $i$ does not appear in any block obtained applying the rule above, then reduce all numbers larger than $i$ in $(1)$.
\end{enumerate}
\medskip

For example ${\mbox {\it std}_{\mathcal M}}[(1,6,7),(2,3),5,4]=[(1,5),2,4,3]$.

 Let $J$ be a subset of $[n]$ and let $B\in S_n^{\mathcal M}$, the {\it restriction} $B\vert _{J}$ of $B$ to $J$ is the intersection $B$ with $J$. If 
$B=[(1,4,6), (2,7), (3,5)]$ and $J=\{ 1,2,4,6\}$, then $B\vert _J=[(1,4,6),2]$. Let $B=[(i_1^1,\dots ,i_{r_1}^1),\dots ,(i_1^l,\dots ,i_{r_l}^l)]$ be a big multi-permutation, for any integer $k$ we denote by $B+k$ the ordered partition $[(i_1^1+k,\dots ,i_{r_1}^1+k),\dots ,(i_1^l+k,\dots ,i_{r_l}^l+k)]$.
\medskip

In \cite{LamPyl}, the authors define an algebra structure on the vector space ${\mathcal M}{\mbox {\it MR}}$ spanned by the set of all ${\mathcal M}$-permutations, as follows:
$$B\bullet D=\sum W, \ {\rm for}\ B\in S_n^{\mathcal M}\ {\rm and}\ D\in S_m^{\mathcal M},$$
where the sum is taken over :
\begin{enumerate}\item all $W\in S_{n+m}^{\mathcal M}$ such that $W\vert _{[n]}=B$ and ${\mbox {\it std}_{\mathcal M}}(W\vert _{[m]+n})=D$,
\item all $W\in S_{n+m-1}^{\mathcal M}$ such that $W\vert _{[n]}=B$ and ${\mbox {\it std}_ {\mathcal M}}(W\vert _{[m]+n-1})=D$.\end{enumerate}

For example,
$$\displaylines {
[(13),2]\bullet [2,1]=[(1,3),2,5,4]+[(1,3),(2,5),4]+[(1,3),5,2,4]+[(1,3,5), 2,4]+\hfill\cr
[5,(1,3),2,4]+[(1,3,5),(2,4)]+[5,4,(1,3),2]+[4,(1,3),2]+ [(1,3),5,4,2]+\cr
\hfill [(1,3,5),4,2]+[5,(1,3),4,2].\cr }$$
\medskip

For any ordered partition $W=(W_1,\dots ,W_l)\in S_{n+m}^{\mathcal M}$ such that $W\vert _{[n]}=B$ and ${\mbox {\it std}_ {\mathcal M}}(W\vert _{\{n+1,\dots ,n+m\}})=D$, define the integer $\cap _{B,D}^W$ as
 the number of blocks $W_j$ such that $W_j\cap [n]\neq \emptyset$ and $W_j\cap [m]+n\neq \emptyset $.
 It is immediate to check that, if $B=(B_1,\dots ,B_r)$ and $D=(D_1,\dots , D_s)$, then $\cap _{B,D}^W =r+s-l$.
\medskip

Let $B=(B_1,\dots ,B_r)$ and $D=(D_1,\dots ,D_s)$. We define binary operations $\succ _q$, $\cdot _q$ and $\prec _q$ on the space ${\mathcal M}{\mbox {\it MR}}$ as follows:\begin{enumerate}
\item $B\succ _q D:=\sum q^{\cap _{B,D}^W}W$, where the sum is taken over:\begin{itemize}
\item all $W=(W_1,\dots ,W_l)\in S_{n+m}^{\mathcal M}$ with $W\vert _{[n]}=B$ and ${\mbox {\it std}_ {\mathcal M}}(W\vert _{[m]+n})=D$, 
\item all $W\in S_{n+m-1}^{\mathcal M}$ with $W\vert _{[n]}=B$ and ${\mbox {\it std}_ {\mathcal M}}(W\vert _{[m]+n-1})=D$, \end{itemize}

\noindent such that $W_l\cap [n]=\emptyset$.
\item $B\cdot _q D:=\sum q^{\cap _{B,D}^W-1}W$, where the sum is taken over:\begin{itemize}
\item all $W=(W_1,\dots ,W_l)\in S_{n+m}^{\mathcal M}$ with $W\vert _{[n]}=B$ and ${\mbox {\it std}_ {\mathcal M}}(W\vert _{[m]+n})=D$, such that $W_l\cap [n]\neq \emptyset $ and $W_l\cap [m]+n\neq \emptyset$,
\item all $W\in S_{n+m-1}^{\mathcal M}$ with $W\vert _{[n]}=B$ and ${\mbox {\it std}_ {\mathcal M}}(W\vert _{[m]+n-1})=D$, such that $W_l\cap [n]\neq \emptyset$ and $W_l\cap 
[m]+n-1\neq \emptyset$.\end{itemize}
\item $B\prec _q D:=\sum q^{\cap _{B,D}^W}W$, where the sum is taken over:\begin{itemize}
\item all $W=(W_1,\dots ,W_l)\in S_{n+m}^{\mathcal M}$ with $W\vert _{[n]}=B$ and ${\mbox {\it std}_ {\mathcal M}}(W\vert _{[m]+n})=D$, such that $W_l\cap [m]+n=\emptyset $,
\item all $W\in S_{n+m-1}^{\mathcal M}$ with $W\vert _{[n]}=B$ and ${\mbox {\it std}_ {\mathcal M}}(W\vert _{[m]+n-1})=D$, such that $W_l\cap [m]+n-1=\emptyset$.\end{itemize}
\end{enumerate}

\noindent {\rm Let us denote by ${\mathcal M}{\mbox {\it MR}(q)}$ the space ${\mathcal M}{\mbox {\it MR}}$ equipped with the products $\succ _q$, $\cdot _q$ and $\prec _q$.}
\medskip

\begin{proposition} The data $({\mathcal M}{\mbox {\it MR}(q)},\succ _q, \cdot _q, \prec _q)$ is a $q$-tridendriform algebra.\end{proposition}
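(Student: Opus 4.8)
The plan is to avoid a brute-force check of the seven relations of Definition~\ref{def:TriDend} and instead transport the $q$-tridendriform structure from ${\mbox {\bf ST}(q)}$, which is already a $q$-tridendriform algebra (item {\bf b)} of Examples~\ref{extridend}). To this end I would introduce the linear map $\Phi:{\mbox {\bf ST}(q)}\longrightarrow {\mathcal M}{\mbox {\it MR}(q)}$ that sends a surjection $f\in{\mbox {\bf ST}_n}$, viewed as the ordered set partition $(f^{-1}(1),\dots ,f^{-1}(r))$ of $[n]$, to its ${\mathcal M}$-standardization ${\mbox {\it std}_{\mathcal M}}(f)$, extended by linearity. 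Since every ${\mathcal M}$-permutation is itself an ordered set partition no block of which contains a consecutive pair, it belongs to ${\mbox {\bf ST}}$ and is fixed by ${\mbox {\it std}_{\mathcal M}}$; hence $\Phi$ restricts to the identity on ${\mathcal M}{\mbox {\it MR}}$ and is in particular surjective.

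The heart of the argument is to prove that $\Phi$ intertwines the three binary operations, that is $\Phi(f\circ g)=\Phi(f)\circ \Phi(g)$ for $\circ\in\{\succ _q,\cdot _q,\prec _q\}$. Granting this, each of the seven relations holds in ${\mathcal M}{\mbox {\it MR}(q)}$: given $a,b,c\in {\mathcal M}{\mbox {\it MR}}$ one chooses preimages under $\Phi$, applies the corresponding relation in ${\mbox {\bf ST}(q)}$, and pushes it forward through the morphism $\Phi$, using surjectivity to reach arbitrary triples. Thus the proposition reduces entirely to the compatibility of $\Phi$ with $\succ _q$, $\cdot _q$ and $\prec _q$.

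To establish that compatibility I would match the two sides termwise. On the ${\mbox {\bf ST}}$ side the product $f\succ _q g$ runs over surjections $hk$ on $[n+m]$ with ${\mbox {\it std}(h)}=f$, ${\mbox {\it std}(k)}=g$ and ${\mbox {\it max}(h)}<{\mbox {\it max}(k)}$, weighted by $q^{\cap(h,k)}$; applying ${\mbox {\it std}_{\mathcal M}}$ collapses exactly the consecutive pairs lying in a common block, and this is what produces the terms of size $n+m$ (no collapse) and of size $n+m-1$ (a single collapse at the junction) in the definition of $\succ _q$ on ${\mathcal M}{\mbox {\it MR}}$. There are three points to verify: (i) under the value-ordering of blocks the condition ${\mbox {\it max}(h)}<{\mbox {\it max}(k)}$ becomes $W_l\cap [n]=\emptyset$ on the last block, and likewise $=$ and $>$ become the defining conditions of $\cdot _q$ and $\prec _q$; (ii) the overlap counts agree, $\cap(h,k)=\cap_{B,D}^W$, both counting the blocks that meet the two halves; (iii) as $hk$ ranges over the index set of the ${\mbox {\bf ST}}$ product, the images ${\mbox {\it std}_{\mathcal M}}(hk)$ assemble, with matching $q$-coefficients, into precisely the index set of $\Phi(f)\succ _q\Phi(g)$. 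The same bookkeeping, with the weights $q^{\cap-1}$ and the role of the last block adjusted, handles $\cdot _q$ and $\prec _q$.

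The main obstacle is the local analysis of ${\mbox {\it std}_{\mathcal M}}$ at the gluing point, underlying (ii) and the size drop in (i): one must check that deleting a position $i+1$ during standardization neither annihilates a block that was the sole witness of an overlap nor alters whether the last block meets $[n]$, so that both the $q$-exponent and the $\succ/\cdot/\prec$ type survive the passage from size $n+m$ to $n+m-1$, and that the fibres of $hk\mapsto{\mbox {\it std}_{\mathcal M}}(hk)$ contribute the correct total weight. Once this is settled the termwise correspondence is immediate and the proposition follows by transport of structure. I note that one could instead argue directly, by checking that $\prec _q+q\,\cdot _q+\succ _q$ reproduces a $q$-analogue of the associative product $\bullet$ of \cite{LamPyl} and then sorting the terms of each triple product according to the configuration of the last two blocks; but the route through $\Phi$ is shorter and simultaneously prepares the quotient statement ${\mbox {\bf ST}(q)}\twoheadrightarrow {\mathcal M}{\mbox {\it MR}(q)}$.
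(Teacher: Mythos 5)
Your approach is correct but genuinely different from the paper's. The paper proves the proposition directly on ${\mathcal M}{\mbox {\it MR}}(q)$: for each of the relations of Definition \ref{def:TriDend} (it writes out $B\succ _q(D\succ _qE)=(B*_qD)\succ _qE$ and $(B\prec _qD)\cdot _qE=B\cdot _q(D\succ _qE)$ in detail) it exhibits a common index set of ${\mathcal M}$-permutations $W$ on both sides and checks equality of the $q$-exponents by decomposing them as a sum over blocks, each block $W_i$ contributing $0$, $1$ or $2$ according to how many of the three intervals $[n]$, $[m]+n$, $[r]+n+m$ it meets. You instead transport the structure along the surjection $\varphi$ from ${\mbox {\bf ST}}(q)$, which is legitimate: once $\varphi$ is known to intertwine $\succ _q$, $\cdot _q$ and $\prec _q$ and to be onto, all seven relations push forward by choosing preimages. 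The morphism property you need is precisely parts (1)--(3) of the paper's Theorem \ref{final}, which the paper proves later and independently of this proposition, using Remark \ref{standard} to establish the bijection between the surjections $hk$ and the ${\mathcal M}$-permutations $W$ together with the identity $\cap (h,k)=\cap _{B,D}^W$. Your route buys economy (the combinatorial work is done once and simultaneously yields the quotient statement), at the price of making the proposition depend both on the tridendriform axioms for ${\mbox {\bf ST}}(q)$, which the paper only cites from \cite{Chapoton} and \cite{PalRon}, and on the full strength of Theorem \ref{final}. One imprecision to watch in your termwise matching: since $\varphi$ does not respect the grading, the dichotomy ``no collapse / one collapse at the junction'' should be stated relative to the sizes of $\varphi (f)$ and $\varphi (g)$ rather than $n$ and $m$; the collapses internal to $[n]$ or to $[m]+n$, coming from $f(i)=f(i+1)$ or $g(i)=g(i+1)$, are inherited uniformly by every $hk$ in the index set and are already absorbed into $\varphi (f)$ and $\varphi (g)$, so only the pair $(n,n+1)$ produces the drop in size by one.
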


\begin{proof} Let $B=(B_1,\dots ,B_r)\in S_{n}^{\mathcal M}$, $D=(D_1,\dots ,D_s)\in S_{m}^{\mathcal M}$ and 

\noindent $E=(E_1,\dots ,E_t)\in S_{p}^{\mathcal M}$. We prove that $B\succ _q(D\succ _q E)=(B*_qD)\succ _q E$, and that $B\cdot _q(D\succ _q E)= (B\prec _q D)\cdot _q E$. The other relationships can be verified in a similar way.

We have that  $B\succ _q(D\succ _q E)=\sum _{W}\delta (W)W$, while $(B*_qD)\succ _q E=\sum _{W}\alpha(W)W$, where the both sums are taken over all the ${\mathcal M}$-permutations $W=(W_1,\dots ,W_l)$ satisfying that:
\begin{itemize}\item $W\in S_{n+m+r}^{\mathcal M}$, $W^1:=W\vert _{[n]}=B$, $W^2:=W\vert _{\{n+1,\dots ,n+m\}}=D$, $W^3:=W\vert _{\{n+m+1,\dots ,n+m+r\}}=E$ and $W_l=E_t+n+m$, 
\item $W\in S_{n+m+r-1}^{\mathcal M}$, $W^1:=W\vert _{[n]}=B$, $W^2:=W\vert _{\{n+1,\dots ,n+m\}}=D$, $W^3:=W\vert _{\{n+m,\dots ,n+m+r-1\}}=E$ and $W_l=E_t+n+m-1$,
\item $W\in S_{n+m+r-1}^{\mathcal M}$, $W^1:=W\vert _{[n]}=B$, $W^2:=W\vert _{\{n,\dots ,n+m-1\}}=D$, $W^3:=W\vert _{\{n+m,\dots ,n+m+r-1\}}=E$ and $W_l=E_t+n+m-1$,
\item $W\in S_{n+m+r-2}^{\mathcal M}$, $W^1:=W\vert _{[n]}=B$, $W^2:=W\vert _{\{n,\dots ,n+m-1\}}=D$, $W^3:=W\vert _{\{n+m-1,\dots ,n+m+r-2\}}=E$ and $W_l=E_t+n+m-2$.\end{itemize}
\medskip

We need to prove that $\alpha (W)=\delta (W)$. We give a detailed proof of it for the case $W\in S_{n+m+r}^{\mathcal M}$, the other cases are analogous.

\noindent Let $V\in S_{n+m}^{\mathcal M}$ be such that $W\vert _{[n+m]}=V$ and let $R\in S_{m+r}^{\mathcal M}$ be such that 
$W\vert _{[m+r]+n}=R+n$. We have that $\alpha (W)=\cap _{B,D}^V+\cap _{V,E}^W$, where $\cap _{B,D}^V$ is the number of blocks of $V$ which have both elements in $[n]$ and elements in $[m]+n$, while $\cap _{V,E}^W$ is the number of blocks of $W$ which have both elements in $[n+m]$ and elements in $[r]+n+m$. So, $\alpha (W)=\sum _{i=1}^l\alpha (W_i)$, where $\alpha (W_i)=$
$$\begin{cases}0,&{\rm if}\ W_i\subseteq [n]\ {\rm or}\ W_i\subseteq [m]+n\ {\rm or}\ W_i\subseteq [r]+n+m,\\
1,&{\rm if}\ W_i\ {\rm contains\ integers\ in\ exactly\ two\ sets\ of}\ [n],[m]+n\ {\rm and}\ [r]+n+m,\\
2,&{\rm if}\ W_i\ {\rm contains\ integers\ in\ all\ the\ sets}\ [n], [m]+n\ {\rm and}\ [r]+n+m.\end{cases}$$

On the other hand, $\delta (W)=\cap _{D,E}^R+\cap _{B,R}^W$, where $\cap _{D,E}^R$ is the number of blocks of $R$ which have both elements in $[m]$ and elements in $[r]+m$, while $\cap _{B,R}^W$ is the number of blocks of $W$ which have both elements in $[n]$ and elements in $[m+r]+n$, which implies that $\delta (W)=\sum _{i=1}^l\alpha (W_i)=\alpha (W)$.
\medskip

For the second equality, we have that $B\cdot _q (D\succ _qE)=\sum q^{\beta (W)}W$ and $(B\prec _q D)\cdot _q E=\sum q^{\gamma (W)}W$, where both sums are taken over all 
${\mathcal M}$-permutations $W=(W_1,\dots ,W_l)$  such that:\begin{itemize}
\item $W\in S_{n+m+r}^{\mathcal M}$, $W^1:=W\vert _{[n]}=B$, $W^2:=W\vert _{[m]+n}=D$, $W^3:=W\vert _{[r]+n+m}=E$, and $W_l=B_r\cup E_t+n+m$,
\item $W\in S_{n+m+r-1}^{\mathcal M}$, $W^1:=W\vert _{[n]}=B$, $W^2:=W\vert _{[m]+n}=D$, $W^3:=W\vert _{[r]+n+m-1}=E$, and $W_l=B_r\cup E_t+n+m-1$,
\item $W\in S_{n+m+r-1}^{\mathcal M}$, $W^1:=W\vert _{[n]}=B$, $W^2:=W\vert _{[m]+n-1}=D$, $W^3:=W\vert _{[r]+n+m-1}=E$, and $W_l=B_r\cup E_t+n+m-1$,
\item $W\in S_{n+m+r-2}^{\mathcal M}$, $W^1:=W\vert _{[n]}=B$, $W^2:=W\vert _{[m]+n-1}=D$, $W^3:=W\vert _{[r]+n+m-2}=E$, and $W_l=B_r\cup E_t+n+m-2$.\end{itemize}
To check that $\beta (W)=\gamma (W)$, for all $W$, is suffices to do a similar computation that the one in the previous case.\end{proof}
\medskip

The coproduct on the space ${\mathcal M}{\mbox MR}_+$ is defined by T. Lam and P. Pylyyavsky (see \cite{LamPyl}) as follows:
$$\Delta (B)=\sum _{[W,R]=B} {\mbox {\it std}_{\mathcal M}}(W)\otimes {\mbox {\it std}_{\mathcal M}}(R),$$
where $[W,R]$ is the union of two ordered partitions $W$ and $R$, such that $W$ is a partition of $J$ and $R$ is a partition of $K$ with $[n]=J\cup K$ and $J\cap K=\emptyset$ . 
In other words, for $B=(B_1,\dots ,B_r)$ and $0\leq j\leq r$ define $B_{\leq j}:=(B_1,\dots ,B_j)$ and $B_{>j}:=(B_{j+1},\dots ,B_r)$. The coproduct $\Delta $ on $B$ is given by:
$$\Delta (B)=\sum _{i=0}^l {\mbox {\it std}_{\mathcal M}}B_{\leq i}\otimes  {\mbox {\it std}_{\mathcal M}}B_{>i}.$$
\medskip

We have, for example, that:
$$\displaylines {
\Delta ([1])=[1]\otimes 1_{\KK}+1_{\KK}\otimes [1],\hfill\cr
\Delta ([(2)(1)])=[(2)(1)]\otimes 1_{\KK}+[1]\otimes [1]+1_{\KK}\otimes [(2)(1)],\hfill\cr
\Delta ([(13)(2)])=[(13)(2)]\otimes 1_{\KK}+[1]\otimes [1]+1_{\KK}\otimes[(13)(2)],\hfill \cr}$$
in the last example, note that $[(13)(2)]=[[(13)] ,[2]]$ and ${\mbox {\it std}_{\mathcal M}}[(13)]=[1]={\mbox {\it std}_{\mathcal M}}[2]$.

In \cite{LamPyl} the authors prove that $({\mathcal M}{\mbox MR}(1),*_1,\Delta )$ is a bialgebra. We want to show that ${\mathcal M}{\mbox {\it MR}(q)}$ equipped with the coproduct $\Delta$ is a quotient of the $q$-tridendriform bialgebra  ${\mbox {\bf ST}(q)}$.

let  $\varphi$ be the map from the set $\bigcup _{n\geq 1}{\mbox {\bf ST}_n}$ of all surjections to the set $\bigcup _{n\geq 1}S_n^{\mathcal M}$ of ${\mathcal M}$-permutations, which sends $f\in {\mbox {\bf ST}_n}$ to the element 

\noindent ${\mbox {\it std}_{\mathcal M}}[(f^{-1}(1)),\dots ,(f^{-1}(n))]$. For example, if $f=(2,3,3,6,1,5,1,2,4)$ then $$\varphi (f)={\mbox {\it std}_{\mathcal M}}[(5,7),(1,8),(2,3),9,6,4]=[(4,6),(1,7),2,8,5,3].$$ Note that $\varphi $ is surjective and does not respect the graduation.

\begin{remark}\label{standard} {\rm Let $f\in {\mbox {\bf ST}_n}$ be a surjection, and let $1\leq l\leq n$ be such that ${\mbox {\it std}_{\mathcal M}}[(f^{-1}(1)),\dots ,(f^{-1}(n))]\in S_l^{\mathcal M}$, then there exists a unique ${\overline f}\in {\mbox {\bf ST}_l}$ such that ${\mbox {\it std}_{\mathcal M}}[(f^{-1}(1)),\dots ,(f^{-1}(n))]={\mbox {\it std}_{\mathcal M}}[({\overline f}^{-1}(1)),\dots ,({\overline f}^{-1}(l))]$. Moreover, for any map ${\overline h}:\{1,\dots ,l\}\longrightarrow \{1,\dots ,r\}$ such that ${\mbox {\it std}({\overline h})}={\overline f}$, there exist a unique $h\in {\mathcal F}_n$ such that:\begin{enumerate}
\item  $({\overline h}(1),\dots ,{\overline h}(l))$ is obtained from $(h(1),\dots ,h(n))$ by eliminating all integers $h(i)$ which are equal to $h(i-1)$, for $1<i\leq n$, 
\item ${\mbox {\it std}(h)}=f$.\end{enumerate}}\end{remark}

For example, if $f=(1,2,2,3,1,4)$, then ${\overline f}=(1,2,3,1,4)$. Take ${\overline h}=(4,6,7,4,9)$, we get that $h=(4,6,6,7,4,9)$.

Applying Remark \ref{standard} we are able to prove the following result.

\begin{theorem}\label{final} For any pair of elements $f\in {\mbox {\bf ST}_n}$ and $g\in {\mbox {\bf ST}_m}$ we have that:\begin{enumerate}
\item $\varphi (f\prec_q g)=\varphi(f)\prec _q \varphi (g)$,
\item $\varphi (f\cdot _q g)=\varphi(f)\cdot _q \varphi (g)$,
\item $\varphi (f\succ _q g)=\varphi(f)\succ _q \varphi (g)$,
\item $\Delta (\varphi (f))=(\varphi \otimes \varphi )(\Delta (f))$.\end{enumerate}
\end{theorem}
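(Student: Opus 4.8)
The plan is to prove all four identities by a term-by-term matching that passes, via Remark \ref{standard}, between a surjection and its run-reduction. First I would record the structural picture behind $\varphi$: for $f\in {\mbox {\bf ST}_n}$, let $\overline f\in {\mbox {\bf ST}_l}$ be the reduced surjection of Remark \ref{standard}, i.e.\ the one obtained from $f$ by collapsing each maximal run of equal consecutive values. Then $\varphi(f)=\varphi(\overline f)$, and since $\overline f$ has no two consecutive equal values its fibre partition is already an $\mathcal M$-permutation, so $\varphi(f)$ is simply $\overline f$ read as an $\mathcal M$-permutation; reduced surjections on $[l]$ thus biject with $\mathcal M$-permutations of $[l]$. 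Because $\mathrm{std}$ preserves both the strict order and the ties of consecutive values, two maps $h,h'$ with $\mathrm{std}(h)=\mathrm{std}(h')=f$ have the same positions of consecutive repetition, hence the same run structure; together with Remark \ref{standard} this shows that on the fibre $\{h:\mathrm{std}(h)=f\}$ the run-reduction $h\mapsto \overline h$ is injective and that $h$ is recovered uniquely from $\overline h$ and $f$.

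For the product identities (1)--(3) I would fix $f\in {\mbox {\bf ST}_n}$, $g\in {\mbox {\bf ST}_m}$, set $B=\varphi(f)\in S_{n_0}^{\mathcal M}$ and $D=\varphi(g)\in S_{m_0}^{\mathcal M}$, and expand, say, $\varphi(f\succ_q g)=\sum_{\max(h)<\max(k)}q^{\cap(h,k)}\varphi(hk)$ over the pairs $(h,k)$ of the definition. The decisive observation is that the runs of $hk$ are the runs of $h$ followed by those of $k$, except that the last run of $h$ and the first run of $k$ fuse into one precisely when $h(n)=k(1)$. Hence $\varphi(hk)=\overline{hk}$ lives on $n_0+m_0$ letters when $h(n)\neq k(1)$ and on $n_0+m_0-1$ letters when $h(n)=k(1)$, which is exactly the dichotomy between the $S_{n+m}^{\mathcal M}$ and $S_{n+m-1}^{\mathcal M}$ summands in the definition of the products on ${\mathcal M}{\mbox {\it MR}(q)}$. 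Using the first paragraph I would then set up the bijection: a target $\mathcal M$-permutation $W$ in $B\succ_q D$ splits canonically into its $[n_0]$-part and its complementary part, and Remark \ref{standard} un-reduces these uniquely against the fixed run-patterns of $f$ and $g$ into a single pair $(h,k)$ with $\varphi(hk)=W$. The constraint $\max(h)<\max(k)$ says the global maximal value occurs only on the $k$-side, i.e.\ $W_l\cap[n_0]=\emptyset$, matching the $\succ$-clause; the $\cdot_q$ and $\prec_q$ clauses come out the same way after replacing this junction condition.

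It remains to match the weights, and this is the step I expect to be the main obstacle. A value $v\in \mathrm{Im}(h)\cap\mathrm{Im}(k)$ is exactly a letter of $hk$ occurring on both sides of the cut, hence a block of $\overline{hk}=W$ meeting both the $h$-runs and the $k$-runs, and conversely; so $\cap(h,k)$ equals the number of straddling blocks of $W$, which is $\cap_{B,D}^{W}$. Quantitatively, $\mathrm{std}$ preserves the number of distinct values, so $|\mathrm{Im}(h)|=\max(f)=r$ and $|\mathrm{Im}(k)|=\max(g)=s$, while $W$ has $l=|\mathrm{Im}(h)\cup\mathrm{Im}(k)|$ blocks; therefore $\cap(h,k)=r+s-l=\cap_{B,D}^{W}$, the junction-fusion case being precisely where $l$ drops by one. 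Since the pair $(h,k)$ attached to each $W$ is unique, the coefficient of $W$ is the single power $q^{\cap_{B,D}^W}$ on both sides, giving (1); the delicate bookkeeping is to carry the junction condition and this weight count simultaneously through all three operations.

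Finally, for the coproduct identity (4) I would compare the two descriptions of $\Delta$. On ${\mbox {\bf ST}}$ the co-restriction formula splits $f$ at a value level, $\Delta(f)=\sum_{j} f\vert^{[j]}\otimes \mathrm{std}(f\vert^{[r-j]+j})$; on ${\mathcal M}{\mbox {\it MR}}$ the coproduct splits blocks, $\Delta(B)=\sum_{i}\mathrm{std}_{\mathcal M}B_{\leq i}\otimes \mathrm{std}_{\mathcal M}B_{>i}$. Since the blocks of $B=\varphi(f)$ are the fibres of $f$, a value-level split of $f$ at level $j$ corresponds to the block split of $B$ at index $i=j$, and the commutation of $\mathrm{std}$ with $\mathrm{std}_{\mathcal M}$ on restrictions, again supplied by Remark \ref{standard}, yields $\varphi(f\vert^{[j]})=\mathrm{std}_{\mathcal M}B_{\leq j}$ and $\varphi(\mathrm{std}(f\vert^{[r-j]+j}))=\mathrm{std}_{\mathcal M}B_{>j}$. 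Summing over the common index set $j=0,\dots,r$ gives $\Delta(\varphi(f))=(\varphi\otimes\varphi)\Delta(f)$, completing the proof.
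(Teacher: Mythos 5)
Your proposal is correct and follows essentially the same route as the paper's proof: a term-by-term, weight-preserving bijection between the pairs $(h,k)$ indexing the products on ${\mbox {\bf ST}}$ and the ${\mathcal M}$-permutations $W$ indexing the products on ${\mathcal M}{\mbox {\it MR}}$, obtained by un-reducing the blocks of $W$ against the run patterns of $f$ and $g$ via Remark \ref{standard}, together with a matching of the co-restriction splitting of $\Delta$ on ${\mbox {\bf ST}}$ with the block splitting on ${\mathcal M}{\mbox {\it MR}}$. If anything your write-up is slightly more complete, since you verify explicitly that $\cap (h,k)=\cap _{B,D}^{W}$ via the count $r+s-l$, a step the paper's proof leaves implicit.
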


\begin{proof} If $h$ and $k$ are two maps such that $hk\in {\mbox {\bf ST}_{n+m}}$, ${\mbox {\it std}(h)}=f$ and ${\mbox {\it std}(k)}=g$, then
$$\displaylines {
{\mbox {\it std}_{\mathcal M}}[(h^{-1}(1)),\dots ,(h^{-1}(r))]={\mbox {\it std}_{\mathcal M}}[(f^{-1}(1)),\dots ,(f^{-1}(n))],\hfill\cr
{\mbox {\it std}_{\mathcal M}}[(k^{-1}(1)),\dots ,(k^{-1}(r)]={\mbox {\it std}_{\mathcal M}}[(g^{-1}(1)),\dots ,(g^{-1}(m)],\hfill\cr }$$
where ${\mbox {\it max}(hk)}=r\leq n+m$.

Suppose that  ${\mbox {\it max}(h)}>{\mbox {\it max}(k)}$, we have that:
$$[((hk)^{-1}(1)),\dots ,((hk)^{-1}(r))]=[(h^{-1}(1)\cup (k^{-1}(1)+n)),\dots ,(h^{-1}(r-1)\cup (k^{-1}(r-1)+n)),(h^{-1}(r))],$$
where $(h^{-1}(i)\cup (k^{-1}(i)+n))$ denotes the disjoint union of the sets $h^{-1}(i)$ and $k^{-1}(i)+n$, for $1\leq i\leq r-1$.
The standardization ${\mbox {\it std}_{\mathcal M}}[((hk)^{-1}(1)),\dots ,((hk)^{-1}(r))]$ is a ${\mathcal M}$-permutation $W=(W_1,\dots ,W_r)$ satisfying that:\begin{enumerate}
\item if $h(n)\neq k(1)$, then $W\vert _{[n]}={\mbox {\it std}_{\mathcal M}}[(f^{-1}(1)),\dots ,(f^{-1}(n))]$, $W\vert _{[m]+n}={\mbox {\it std}_{\mathcal M}}[(g^{-1}(1)+n),\dots ,(g^{-1}(m)+n)]$ and $W_r\cap ([m]+n)=\emptyset $,
\item if $h(n)= k(1)$, then $W\vert _{[n]}={\mbox {\it std}_{\mathcal M}}[(f^{-1}(1)),\dots ,(f^{-1}(n))]$, $W\vert _{[m]+n-1}={\mbox {\it std}_{\mathcal M}}[(g^{-1}(1)+n-1),\dots ,(g^{-1}(m)+n-1)]$ and $W_r\cap ([m]+n-1)=\emptyset $.\end{enumerate}

Conversely, let $W=(W_1,\dots ,W_r)$ be a ${\mathcal M}$-permutation such that $W\vert _{[n]}=\varphi (f)={\mbox {\it std}_{\mathcal M}}[(f^{-1}(1)),\dots ,(f^{-1}(n))]$ and $W_r\subseteq [n]$, we have that \begin{enumerate}
\item if $W\vert _{[m]+n}={\mbox {\it std}_{\mathcal M}}[(g^{-1}(1)+n),\dots ,(g^{-1}(m)+n)]$, then there exist maps ${\overline h}$ and ${\overline k}$ defined as follows:\begin{enumerate}
\item ${\overline h}(i)$ is the unique integer such that $i\in W_{{\overline h}(i)}$.
\item ${\overline k}(j)$ is the unique integer such that $j+n\in W_{{\overline k}(j)}$.\end{enumerate}
By Remark \ref{standard}, there exist unique elements $h\in {\mathcal F}_n$ and $k\in {\mathcal F}_m$ such that ${\mbox {\it std}(h)}=f$, ${\mbox {\it std}(k)}=g$ and ${\mbox {\it std}_{\mathcal M}}[((hk)^{-1}(1)),\dots ,((hk)^{-1}(r))]=W$. \item if $W\vert _{[m]+n-1}={\mbox {\it std}_{\mathcal M}}[(g^{-1}(1)+n-1),\dots ,(g^{-1}(m)+n-1)]$, then the maps ${\overline h}$ and ${\overline k}$ are defined as follows:\begin{enumerate}
\item ${\overline h}(i)$ is the unique integer such that $i\in W_{{\overline h}(i)}$.
\item ${\overline k}(j)$ is the unique integer such that $j+n-1\in W_{{\overline k}(j)}$.\end{enumerate}
Again, there exist unique elements $h\in {\mathcal F}_n$ and $k\in {\mathcal F}_m$ such that ${\mbox {\it std}(h)}=f$, ${\mbox {\it std}(k)}=g$ and ${\mbox {\it std}_{\mathcal M}}[((hk)^{-1}(1)),\dots ,((hk)^{-1}(r))]=W$. 
\end{enumerate}
Moreover, since $W_r\cap ([m]+n)=\emptyset $, we get that ${\mbox {\it max}(k)}<{\mbox {\it max}(h)}=r$ in both cases. 

We get then that $\varphi (f\prec_q g)=\varphi(f)\prec _q \varphi (g)$, the proofs of the second and third statements follow from similar arguments.
\medskip

To end the proof of the theorem we need to show that $\varphi $ is a coalgebra homomorphism. For $f\in {\mbox {\bf ST}_n}$, let ${\overline f}\in {\mbox {\bf ST}_l}$ be the unique surjection such that 
${\mbox {\it std}_{\mathcal M}}[(f^{-1}(1)),\dots ,(f^{-1}(r))]=[({\overline f}^{-1}(1)),\dots , ({\overline f}^{-1}(l))]$. It is easy to see that there exists a bijection between the set of elements $(r,f_{(1)}^k,f_{(2)}^k)$, with $0\leq k\leq n$, 
$f_{(1)}^k\in {\mbox {\bf ST}_k}$ and $f_{(2)}^k\in {\mbox {\bf ST}_{n-k}}$, such that $$f=(f_{(1)}^k\times f_{(2)}^k)\circ \delta _k,\ {\rm for\ some}\ \delta _r\in {\mbox {\it Sh}(k,n-k)},$$
and the set of elements $(j,{\overline f}_{(1)}^j,{\overline f}_{(2)}^j)$, with $0\leq j\leq l$, 
${\overline f}_{(1)}^j\in {\mbox {\bf ST}_j}$ and ${\overline f}_{(2)}^j\in {\mbox {\bf ST}_{l-j}}$, such that $${\overline f}=({\overline f}_{(1)}^j\times {\overline f}_{(2)}^j)\circ \tau _j,\ {\rm for\ some}\ \tau _j\in {\mbox {\it Sh}(j,l-j)}.$$

So, it suffices to verify that $\Delta (\varphi (f))=(\varphi \otimes \varphi )(\Delta (f))$ for $f$ satisfying that ${\mbox {\it std}_{\mathcal M}}[(f^{-1}(1)),\dots ,(f^{-1}(n))]=[(f^{-1}(1)),\dots , (f^{-1}(r))]$, that is when $f(i)\neq f(i+1)$ for $1\leq i\leq n-1$. If for some $0\leq k\leq n$ there exist $f_{(1)}^k\in {\mbox {\bf ST}_k}$, $f_{(2)}^k\in {\mbox {\bf ST}_{n-k}}$ and $\delta _k\in {\mbox {\it Sh}(k,n-k)}$ such that $f=(f_{(1)}^k\times f_{(2)}^k)\circ \delta _k$ and 
$W:=[((f_{(2)}^k)^{-1}(1)+s),\dots ,((f_{(2)}^k)^{-1}(n-k)+s)]$.

Conversely, let $[(f^{-1}(1)),\dots , (f^{-1}(r))]=[R,W]$, with $R=[R_1,\dots ,R_s]$ and $W=[W_1,\dots ,W_{r-s}]$, and suppose that $R$ is a partition of $\{i_1<\dots <i_k\}$ and $W$ is a partition of $\{j_1<\dots <j_{n-k}\}$. Define $f_{(1)}^k$ and $f_{(2)}^k$ as follows:\begin{enumerate}
\item $f_{(1)}^k(l)$ is the unique integer $1\leq f_{(1)}^k(l)\leq s$ such that $i_l\in R_{f_{(1)}^k(l)}$, for $1\leq l\leq k$,
\item $f_{(2)}^k(l)$ is the unique integer $1\leq f_{(2)}^k(l)\leq r-s$ such that $j_l\in R_{f_{(2)}^k(l)}$, for $1\leq l\leq n-k$.\end{enumerate}
It is clear that there exists a shuffle $\delta _k\in {\mbox {\it Sh}(k,n-k)}$ such that $f= (f_{(1)}^k\times f_{(2)}^k)\circ \delta _k$.
We get then that 
$$\displaylines {
(\varphi \otimes \varphi ) (\Delta (f))=\sum_k \varphi (f_{(1)}^k)\otimes \varphi (f_{(2)}^k)=\hfill\cr
\sum _k {\mbox {\it std}_{\mathcal M}}[((f_{(1)}^k)^{-1}(1)),\dots ,((f_{(1)}^k)^{-1}(k))]\otimes {\mbox {\it std}_{\mathcal M}}[((f_{(2)}^k)^{-1}(1)),\dots ,((f_{(2)}^k)^{-1}(n-k))]=\cr
\hfill \sum _{\varphi (f)=[R,W]}{\mbox {\it std}_{\mathcal M}}(R)\otimes {\mbox {\it std}_{\mathcal M}}(W)=\Delta (\varphi (f)).\cr }$$\end{proof}

As a consequence of Theorem \ref{final}, we can assert that ${\mathcal M}{\mbox {\it MR}(q)}$ is a $q$-tridendriform bialgebra.

\begin{corollary} The $q$ tridendriform algebra ${\mathcal M}{\mbox {\it MR}(q)}$ equipped with the coproduct $\Delta $ is a $q$-tridendriform bialgebra which is a quotient of ${\mbox {\bf ST}(q)}$.\end{corollary}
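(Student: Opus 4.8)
The plan is to deduce the corollary directly from Theorem \ref{final}, the surjectivity of $\varphi$, and the fact, established in Section 1, that ${\mbox {\bf ST}(q)}$ is itself a $q$-tridendriform bialgebra. The key observation is that $\varphi$ is \emph{simultaneously} a morphism for all three tridendriform operations (parts (1)--(3) of Theorem \ref{final}) and a homomorphism of coalgebras (part (4)); it is therefore a surjective morphism of $q$-tridendriform bialgebras onto ${\mathcal M}{\mbox {\it MR}(q)}$, and everything reduces to transporting the bialgebra axioms across $\varphi$. First I would record that $\varphi$ also respects the associative product $*_q=\prec_q+q\,\cdot_q+\succ_q$: since $\varphi$ is linear and preserves each of $\prec_q,\cdot_q,\succ_q$, it preserves their combination, so $\varphi(f*_q g)=\varphi(f)*_q\varphi(g)$.

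To check the compatibility axioms of a $q$-tridendriform bialgebra on ${\mathcal M}{\mbox {\it MR}(q)}$, I would fix $B,D\in{\mathcal M}{\mbox {\it MR}}$ and, using surjectivity, choose $f,g\in{\mbox {\bf ST}}$ with $\varphi(f)=B$ and $\varphi(g)=D$. For the relation involving $\succ_q$ I would compute
$$\Delta(B\succ_q D)=\Delta(\varphi(f)\succ_q\varphi(g))=\Delta(\varphi(f\succ_q g))=(\varphi\otimes\varphi)\Delta(f\succ_q g),$$
using part (3) and then part (4). Because ${\mbox {\bf ST}(q)}$ is a bialgebra we have $\Delta(f\succ_q g)=\sum(f_{(1)}*_q g_{(1)})\otimes(f_{(2)}\succ_q g_{(2)})$; applying $\varphi\otimes\varphi$ and using that $\varphi$ preserves $*_q$ and $\succ_q$ rewrites the right-hand side as $\sum(\varphi(f_{(1)})*_q\varphi(g_{(1)}))\otimes(\varphi(f_{(2)})\succ_q\varphi(g_{(2)}))$. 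Since $\Delta(B)=(\varphi\otimes\varphi)\Delta(f)=\sum\varphi(f_{(1)})\otimes\varphi(f_{(2)})$ and likewise for $D$, this last expression is exactly $\sum(B_{(1)}*_q D_{(1)})\otimes(B_{(2)}\succ_q D_{(2)})$, which is the required identity. The computations for $\cdot_q$ and $\prec_q$ are identical word for word, so the three compatibility relations all hold and ${\mathcal M}{\mbox {\it MR}(q)}$ with $\Delta$ is a $q$-tridendriform bialgebra.

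To see that it is a \emph{quotient}, I would observe that $\ker\varphi$ is a tridendriform ideal (because $\varphi$ preserves $\prec_q,\cdot_q,\succ_q$) and a coideal (because $\varphi$ is a coalgebra map, so $\Delta(\ker\varphi)\subseteq\ker\varphi\otimes{\mbox {\bf ST}}+{\mbox {\bf ST}}\otimes\ker\varphi$), hence a bi-ideal of ${\mbox {\bf ST}(q)}$. The surjection $\varphi$ then induces an isomorphism of $q$-tridendriform bialgebras ${\mbox {\bf ST}(q)}/\ker\varphi\cong{\mathcal M}{\mbox {\it MR}(q)}$, exhibiting the latter as a quotient.

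I do not expect a genuine obstacle: all the real combinatorial content was already carried out in Theorem \ref{final}, and the remaining argument is formal transport of structure along a surjective morphism. The only point requiring mild care is that $\varphi$ does not preserve the grading, so the sums appearing in $\Delta(B)$ and $\Delta(D)$ range over homogeneous pieces of varying degrees; but since each displayed identity is an equality in the (ungraded) tensor product and every step above is linear, this causes no difficulty, and the unit conventions of the bialgebra definition handle the terms in which a factor collapses to $1_{\KK}$ once $\varphi$ is extended unitally to ${\mbox {\bf ST}}_+$.
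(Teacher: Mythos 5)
Your argument is correct and is exactly the deduction the paper intends: the corollary is stated as an immediate consequence of Theorem \ref{final}, obtained by transporting the $q$-tridendriform bialgebra structure of ${\mbox {\bf ST}(q)}$ along the surjective morphism $\varphi$. The paper gives no further details, so your explicit verification of the compatibility axioms and the identification ${\mbox {\bf ST}(q)}/\ker\varphi\cong{\mathcal M}{\mbox {\it MR}(q)}$ simply spells out the same (routine) transport-of-structure argument.
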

\bigskip

\bigskip

\bigskip

\end{document}